\numberwithin{equation}{section}
\def\epsilon{\varepsilon}
\newcommand{\E}{\mathbb{E}}
\newcommand{\R}{\mathbb{R}}
\newcommand{\bbP}{\mathbb{P}}
\newcommand{\cL}{\mathcal{L}}
\newtheorem{theorem}{Theorem}[section]
\newtheorem{lemma}{Lemma}[section]
\newtheorem{proposition}{Proposition}[section]
\newtheorem*{proposition*}{Proposition}
\newtheorem*{corollary*}{Corollary}
\newtheorem*{definitions*}{Definitions}
\newtheorem*{conjecture*}{\bf Conjecture}
\theoremstyle{remark}
\newtheorem{remark}{\bf Remark}[section]
\newtheorem{assumption}{Assumption}[section]
\newcounter{parentalgorithm}
\numberwithin{equation}{section}
\begin{document}

\title{On the Random Batch Method for second order interacting particle systems}
\author[1]{Shi Jin \thanks{shijin-m@sjtu.edu.cn}}
\author[2]{Lei Li\thanks{leili2010@sjtu.edu.cn}}
\author[3]{Yiqun Sun\thanks{yqsun5@sjtu.edu.cn}}
\affil[1,2,3]{School of Mathematical Sciences, Shanghai Jiao Tong University, Shanghai, 200240, P. R. China.}
\affil[1,2]{Institute of Natural Sciences, MOE-LSC, Shanghai Jiao Tong University, Shanghai, 200240, P. R. China.}

\date{}
\maketitle

\begin{abstract}
We investigate several important issues regarding the Random Batch Method (RBM) for second order interacting particle systems. We first show the uniform-in-time strong convergence for second order systems under suitable contraction conditions. Secondly, we propose the application of RBM for singular interaction kernels via kernel splitting strategy, and investigate numerically the application to molecular dynamics.
\end{abstract}

\section{Introduction}

A significant number of important phenomena in physical, social, and biological sciences are described at the microscopic level by interacting particle systems, which exhibit interesting features. Examples include fluids and plasma \cite{frenkel2001understanding,birdsall2004},
swarming \cite{vicsek1995novel,carrillo2017particle,carlen2013kinetic,degond2017coagulation}, chemotaxis \cite{horstmann03,bertozzi12}, flocking  \cite{cucker2007emergent,hasimple2009,albi2013}, synchronization \cite{choi2011complete,ha2014complete}
 and consensus \cite{motsch2014}, to name a few.
These interacting particle systems can be described in general by the first order systems
\begin{gather}\label{eq:Nparticlesys}
dX^i=b(X^i)\,dt+\alpha_N\sum_{j: j\neq i} K(X^i-X^j)\,dt+\sigma\, dW^i,~~i=1,2,\cdots, N,
\end{gather}
or the second order systems
\begin{gather}\label{eq:Nbody2nd}
\begin{split}
& dX^i=V^i\,dt,\\
& dV^i=\Big[ b(X^i)+\alpha_N \sum_{j:j\neq i}K(X^i-X^j)-\gamma V^i \Big]\,dt+\sigma\, dW^i.
\end{split}
\end{gather}
Here, $X^i\in \R^d$ are the labels for the particles, and $b(\cdot)$ is some given external field. The stochastic processes $\{W^i\}_{i=1}^N$
are i.i.d. Wiener processes, or the standard Brownian motions.
We will loosely call $X^i$ the ``locations'' or ``positions'',
and $V^i$ the velocities of the particles, though the specific meaning can be different in different situations. The function $K(\cdot): \R^d\to \R^d$ is the interaction kernel. If $\gamma=\sigma=0$ and $b=-\nabla U$ for some potential $U$, one has a Hamiltonian system like the one for electrons in plasma \cite{vlasov1961many}.
For the molecules in the heat bath \cite{kawasaki1973simple,callen1951irreversibility}, $X^i$ and $V^i$ are the physical positions and velocities, described by the underdamped Langevin equations, where $\sigma$ and $\gamma$ satisfy the so-called ``fluctuation-dissipation relation"
\begin{gather}
\sigma=\sqrt{2\gamma/\beta},
\end{gather}
where $\beta$ is the inverse of the temperature (we assume all the quantities are scaled and hence dimensionless so that the Boltzmann constant is absent). The first order system \eqref{eq:Nparticlesys} can be viewed as the overdamped limit of the second order systems \eqref{eq:Nbody2nd}.

If one directly discretizes \eqref{eq:Nparticlesys} or \eqref{eq:Nbody2nd}, the computational cost per time step is $\mathcal{O}(N^2)$. This is undesired for large $N$. The Fast Multipole Method (FMM) \cite{rokhlin1985rapid} is able to reduce the complexity to $\mathcal{O}(N)$ for fast enough decaying interactions. However, the implementation of FMM is quite involved. A simple random algorithm, called the Random Batch Method (RBM), has been proposed in \cite{jin2020random}  to reduce the computation cost per time step from $\mathcal{O}(N^2)$ to $\mathcal{O}(N)$, based on the simple ``mini-batch" idea. The ``random mini-batch'' idea is famous for its application in the so-called stochastic gradient descent (SGD) \cite{robbins1951,bottou1998online,bubeck2015convex} for machine learning problems. The idea was also used for Markov Chain Monte Carlo methods like the stochastic gradient Langevin dynamics (SGLD) by Welling and Teh \cite{welling2011bayesian} and the Random Batch Monte Carlo methods \cite{lixuzhao2020}, and also for the computation of the mean-field flocking model \cite{albi2013,carrillo2017particle} motivated by Nanbu's algorithm of the Direct Simulation Monte Carlo method \cite{bird1963approach,nanbu1980direct,babovsky1989convergence}. 
The key behind the ``mini-batch'' idea is to find some cheap unbiased random estimator for the original quantity with the variance being controlled. Depending on the specific applications, the design can be different. For interacting particle systems in \cite{jin2020random}, this is realized by random grouping and then interacting the particles only within the groups for each small time subinterval. Compared with FMM, the accuracy of RBM is lower, but RBM is much simpler and is valid for more general potentials (e.g. the SVGD ODE \cite{li2019stochastic}). 
The method converges due to the time average in time, and thus the convergence is like that in the Law of Large Number, but in time (see \cite{jin2020random}  for more detailed explanation). Hence, one may understand such methods as certain Monte Carlo methods. If there is mixing and ergodicity for the systems, the simulation can converge well.

RBM for interacting particle systems has been used or extended in various directions,
from sampling \cite{li2019stochastic,lixuzhao2020,jinli2020qmc} to molecular dynamics \cite{jinlixuzhao2020rbe,li2020direct}, and control of synchronization \cite{biccari2020stochastic,ko2020model}.
RBM has been shown to converge for finite time interval if the interaction kernels are good enough \cite{li2019stochastic,jin2020random}, and in particular an error analysis for deterministic Newton type second order systems is obtained in the Appendix of \cite{jin2020random}.  Moreover, a convergence result of RBM for $N$-body Schr\"odinger equation is also obtained in \cite{golse2019random}. For long time behaviors, it is expected that the method works for systems that own ergodicity and mixing properties, like systems in contact with heat bath and converge to equilibria. Previous rigorous studies of such type mainly focus on first order systems due to good contraction and mixing properties \cite{jin2020random,jin2020convergence}.  
Second order systems are however more common in nature, especially systems in contact with heat bath that are very important for molecular dynamics \cite{frenkel2001understanding}. Whether RBM can be applied directly to obtain good results for direct molecular dynamics simulation needs careful study both in theory and in practice. 
For closed systems that are Hamiltonian, like the particle systems for the Vlasov-Poisson equations (in this case $\gamma=\sigma=0$), RBM may be applied to get correct simulation for finite time, but the long time behavior is not clear for these systems. Hence, in this work we mainly focus on systems that are in contact with heat bath. 

In this work, our goal is two-folded. Firstly, we aim to prove rigorously that RBM converges for large times with certain contraction conditions for second order systems \eqref{eq:Nbody2nd}. Secondly, we aim to combine the random grouping strategy with the kernel strategy as in \cite{martin1998novel,hetenyi2002multiple,lixuzhao2020} so that RBM could be practically applied for molecular dynamical simulations.

Now let us remark the regimes to consider. In the mean field limit regime (\cite{stanley1971, georges1996, lasry2007}), one chooses 
\begin{gather}
\alpha_N=\frac{1}{N-1}
\end{gather}
 so that as $N\to\infty$ the empirical distribution $\mu^{(N)}:=N^{-1}\sum_{i=1}^N \delta(x-X^i)\otimes\delta(v-V^i)$ converges almost surely under the weak topology to the solutions of the limiting PDE 
\begin{gather}\label{eq:limitvlasov}
\partial_t f=-\nabla_x\cdot(vf)-\nabla_v\cdot\Big((b(x)+K*_xf-\gamma v)f\Big)+\frac{1}{2}
\sigma^2\Delta_v f.
\end{gather}
The particle system \eqref{eq:Nbody2nd} can also be regarded as a numerical particle method for solving these mean field PDE \eqref{eq:limitvlasov}. Examples of such PDEs include the granular media equations \cite{cattiaux2008}
and the Vlasov equations for which $\gamma=\sigma=0$
\cite{vlasov1961many}.  In \cite{jin2020random}, it has been shown that
RBM is asymptotic-preserving for first order systems regarding the mean-field limit, which means the algorithm can approximate the one-marginal distribution with error bound independent of $N$. Below, we show in section \ref{sec:esterr} that RBM is also asymptotic-preserving regarding the mean-field limit for second order systems under suitable conditions.

In the molecular dynamics simulations,  one chooses $\alpha_N=1$, and the equations are basically given by
\begin{gather}\label{eq:mdparticle}
\begin{split}
& dX^i=V^i\,dt ,\\
& dV^i=\Big[-\sum_{j:j\neq i}\nabla \phi(X^i-X^j)\Big]\,dt+d\xi^i.
\end{split}
\end{gather}
Here, $\phi(\cdot)$ is the interaction potential and $d\xi^i$ means the interaction with the environment that changes the momentum, which we will discuss in section \ref{subsec:heatbath}.
Depending on how to model the coupling to heat batch, one may choose different thermostats like the Andersen thermostat, the Langevin dynamics or the Nos\'e-Hoover thermostat etc so different expressions for $d\xi^i$ can be used (see section \ref{subsec:heatbath}). Though $\alpha_N=1$ is often chosen for molecular dynamics, one may do time and spatial rescalings to match the mean field regime $\alpha_N=1/(N-1)$ factor. However, the scaling is not crucial for simulation of molecular dynamics (see the discussion in section \ref{subsec:dis}), hence, in this molecular dynamics regime with $\alpha_N=1$, we will apply RBM directly when it has benefits without scaling it to the mean-field regime.

The rest of the paper is organized as follows. In section \ref{sec:alg}, we give a brief introduction to RBM and introduce the potential splitting so that RBM can be applied for systems with singular interaction kernels.
In section \ref{sec:esterr}, we establish the long time strong error estimate
for regular kernels under certain contraction conditions. We provide some discussions on the details on applying RBM with kernel splitting to simulations of molecular dynamics in section \ref{sec:md}. 
Some numerical experiments are performed in section \ref{sec:numerics} to verify the claims and validate the methods.

\section{The algorithms}\label{sec:alg}
In this section, we give a detailed explanation of RBM for interacting particle systems and then propose the application of RBM for singular interaction kernels via a kernel splitting strategy.

Let us briefly explain the random grouping strategy for RBM in \cite{jin2020random} that realizes the mini-batch idea for interacting particle systems. 
Let $T>0$ be the simulation time, and choose a time step $\tau>0$. Pick a batch size $p\ll N$, $p\ge 2$ that divides $N$ (RBM can also be applied if $p$ does not divide $N$; we assume this only for convenience). Consider the discrete time grids $t_k:=k\tau$, $k\in \mathbb{N}$. For each subinterval $[t_{k-1}, t_k)$, the method has two substeps: (1) at $t_{k-1}$, divide the $N$ particles into $n:=N/p$ groups (batches) randomly; (2) let the particles evolve with interaction only inside the batches.

\subsection{RBM for regular kernels}

Applying the above strategy to the second order system \eqref{eq:Nbody2nd} with interacting forces that do not have singularity  yields the method as shown in Algorithm \ref{alg:rbm2nd1}.
\begin{algorithm}[H]
\caption{(RBM for \eqref{eq:Nbody2nd})}
\label{alg:rbm2nd1}
\begin{algorithmic}[1]
\For {$m \text{ in } 1: [T/\tau]$}   
\State Divide $\{1, 2, \ldots, N=pn\}$ into $n$ batches randomly.
     \For {each batch  $\mathcal{C}_q$} 
     \State Update $X^i$'s ($i\in \mathcal{C}_q$) by solving  for $t\in [t_{m-1}, t_m)$ the following
     \begin{gather}\label{eq:RBM2nd}
     \begin{split}
            & dX^i=V^i\,dt,\\
            & dV^i=\Big[b(X^i)+\frac{\alpha_N(N-1)}{p-1}\sum_{j\in\mathcal{C}_q,j\neq i}K(X^i-X^j) -\gamma V^i\Big]\,dt+\sigma\, dW^i.
      \end{split}
      \end{gather}
      \EndFor
 \EndFor
\end{algorithmic}
\end{algorithm}

The method shown in Algorithm \ref{alg:rbm2nd1} shares some similarity with the Stochastic Gradient Hamiltonian Monte Carlo (SGHMC) with friction proposed in \cite[sections 3.2-3.3]{chen2014stochastic}, which is a Markov Chain Monte Carlo method for Bayesian inference and machine learning. The difference is that the method shown in Algorithm \ref{alg:rbm2nd1}
uses random grouping for interacting particles, while SGHMC uses random samples to compute the approximating gradients; i.e., the ways to implement mini-batch are different. The SGHMC in \cite{chen2014stochastic} is a sampling method and the momentum will be resampled occasionally. Since the underdamped Langevin system does not satisfy the detailed balance so using it as a block for the Markov chain may yield in some systematic error.
The method shown in Algorithm \ref{alg:rbm2nd1} is a direct simulation approach for the underdamped Langevin equation so it can be used both for dynamical simulation to capture the transition behaviors approximately, and can also be used for sampling from the equilibrium.

\begin{remark}
Despite the difference mentioned above, we remark that the random grouping strategy can be viewed as a particular stochastic gradient as in \cite{robbins1951,welling2011bayesian} when $K(x)=-\nabla\phi(x)$.
In fact, for this case, we introduce the $Nd$-dimensional vector $\mathfrak{X}:=(X^1,\ldots, X^N)$ and consider the full interacting energy corresponding to $\alpha_N=1/(N-1)$
\begin{gather}\label{eq:energy}
E(\mathfrak{X}):=\frac{1}{2(N-1)}\sum_{i\neq j}\phi(X^i-X^j).
\end{gather}
In \cite{robbins1951,welling2011bayesian}, the stochastic gradient can be computed by choosing any subset of terms in the sum \eqref{eq:energy}. For random grouping in  \cite{jin2020random}, one is only allowed to choose the summands in a particular way. For a given set of random batches $\mathcal{C}=\{\mathcal{C}_1,\cdots, \mathcal{C}_n\}$, one may use the following random variable 
\begin{gather}
\tilde{E}(\mathfrak{X}):=\frac{1}{2(p-1)}\sum_{q=1}^n\sum_{k, \ell \in \mathcal{C}_q}\phi(X^k-X^{\ell})
\end{gather}
to approximate $E(\mathfrak{X})$ and using its gradient for the dynamics leads to the random grouping in \cite{jin2020random}
in the case $K=-\nabla\phi$, though RBM in \cite{jin2020random} applies to more general kernels.
\end{remark}

\subsection{RBM with kernel splitting}

If the interaction kernel $K$ is singular at $x=0$ which is often the case in applications, direct discretization of the equations in Algorithm \ref{alg:rbm2nd1} can lead to numerical instability. For first order systems, in the case $p=2$, one may take advantage of the time-splitting method to accurately solve the singular part to eliminate the instability \cite{jin2020random,li2020direct}. For second order systems or first order systems with $p\ge 3$, the time splitting trick does not apply any more, and applying RBM directly leads to poor results. To resolve this issue, we adopt the splitting strategy in \cite{martin1998novel,hetenyi2002multiple,lixuzhao2020}.

In fact, one decomposes the interacting force $K$ into two parts:
\begin{gather}
K(x)=K_1(x)+K_2(x).
\end{gather}
Here, $K_1$ has short range that vanishes for $|x|\ge r_0$ where $r_0$ is a certain cutoff chosen to be comparable with the mean distance of the particles. The part $K_2(x)$ is a bounded smooth function.
With this decomposition, we then apply RBM to the $K_2$ part only. The resulted method is shown in Algorithm \ref{alg:rbms}.
Now, the summing in $K_1$ can be done in $\mathcal{O}(1)$ time for given $i$ due to the short range. This can be implemented using data structures like Cell-List \cite[Appendix F]{frenkel2001understanding}.
Hence, the cost per time step is again $\mathcal{O}(N)$. Since $K_2$ is bounded, RBM can be applied well due to the boundedness of variance, without introducing too much error. For practical applications, $K_1$ is a repulsive force so that computing the $K_1$ part accurately will forbid the particles getting too close so that the system is not stiff. Then, numerical simulations can be performed well. We show some numerical results in section \ref{subsec:ljnum}.

\begin{algorithm}[H]
\caption{RBM with splitting for \eqref{eq:Nparticlesys} and \eqref{eq:Nbody2nd} }
\label{alg:rbms}
\begin{algorithmic}[1]
\State Split $K=:K_1+K_2$, where $K_1$ has short range, while $K_2$ has long range but is smooth.

\For {$m \text{ in } 1: [T/\tau]$}   
\State Divide $\{1, 2, \ldots, N=pn\}$ into $n$ batches randomly.
     \For {each batch  $\mathcal{C}_q$} 
     \State Update $X^i$'s ($i\in \mathcal{C}_q$) by solving for $t\in [t_{m-1}, t_m)$
     \begin{gather}
     \begin{split}
             dX^i= &\Big( b(X^i)+\alpha_N\sum_{j: j\neq i}K_1(X^i-X^j)\\
                       &+\frac{\alpha_N(N-1)}{p-1}\sum_{j\in\mathcal{C}_q,j\neq i}K_2(X^i-X^j)\Big)\,dt+\sigma\, dW^i,
      \end{split}
      \end{gather}
      or 
     \begin{gather}\label{eq:rbm2ndorder}
     \begin{split}
             dX^i=\,&V^i\,dt ,\\
             dV^i=\,&\Big[ b(X^i)+\alpha_N\sum_{j: j\neq i}K_1(X^i-X^j)-\gamma V^i\Big]\,dt\\
            &+\frac{\alpha_N(N-1)}{p-1}\sum_{j\in\mathcal{C}_q,j\neq i}K_2(X^i-X^j)\,dt+\sigma\, dW^i.
      \end{split}
      \end{gather}
      \EndFor
 \EndFor
\end{algorithmic}
\end{algorithm}

\section{A strong convergence analysis}\label{sec:esterr}

In this section, we perform a strong convergence analysis of RBM for the second order systems \eqref{eq:Nbody2nd} in the mean field regime ( i.e., $\alpha_N=1/(N-1)$). The proof largely makes use of the underlying contraction property for the underdamped Langevin equations (\cite{mattingly2002ergodicity,eberle2019couplings}). Note that due to the degeneracy of the noise terms, the contraction should be proved by suitably chosen variables and Lyapunov functions.

 For the notational convenience, we denote $(X_i, V_i)$ to be the solutions given by \eqref{eq:Nbody2nd}. We denote $(\tilde{X}_i, \tilde{V}_i)$ the solutions given by the RBM process \eqref{eq:RBM2nd}. 
We again use the synchronization coupling as in \cite{jin2020random,jin2020convergence}:
\begin{gather}\label{eq:coupling}
X^i(0)=\tilde{X}^i(0) \sim \mu_0,~~W^i=\tilde{W}^i.
\end{gather}

Let $\mathcal{C}_q^{(k)}$ be the batches at $t_k$ where $1\le q\le n$. 
Define
\begin{gather}
\mathcal{C}^{(k)}:=\{\mathcal{C}_1^{(k)}, \cdots, \mathcal{C}_n^{(k)}\},
\end{gather}
to be the random division of batches at $t_{k}$.
By the Kolmogorov extension theorem \cite{durrett2010}, there exists a probability space $(\Omega, \mathcal{F}, \bbP)$ such that the random variables $\{X_0^{i}, W^i, \mathcal{C}^{(k)}: 1\le i\le N, k\ge 0\}$ are all defined on this probability space and are independent.
Then, $\E$ corresponds to the integration on $\Omega$ with respect to the probability measure $\bbP$. Introduce the $L^2(\cdot)$ norm
\begin{gather}
\|v\|=\sqrt{\mathbb{E}|v|^2}.
\end{gather}

Introduce the filtration $\{\mathcal{F}_{k}\}_{k\ge 0}$ by
\begin{gather}
\mathcal{F}_{k-1}=\sigma(X_0^i, W^i(t), \mathcal{C}^{(j)}; t\le t_{k-1}, j\le k-1).
\end{gather}
Thus, $\mathcal{F}_{k-1}$ is the $\sigma$-algebra generated by the initial values $X_0^i$, $W^i(t)$, and $\mathcal{C}^{(j)}$ for all $i=1,\ldots, N$, $t\le t_{k-1}$ and $j\le k-1$. Clearly, $\mathcal{F}_{k-1}$ contains the information on how batches are constructed for $t\in [t_{k-1}, t_k)$.

For finite time interval, the convergence of RBM is straightforward, as shown below in Proposition \ref{pro:finiteconv}. The proof is similar to that of the results in \cite{jin2020random,jin2020convergence}, and we omit.
\begin{proposition}\label{pro:finiteconv}
Let $b(\cdot)$ be Lipschitz continuous, and $|b|, |\nabla b|$ have polynomial growth.  The interaction kernel $K$ is Lipschitz continuous. Then,
\begin{gather}
\sup_{t\in [0, T]}\sqrt{\E|\tilde{X}^1-X^1|^2+\E|\tilde{V}^1-V^1|^2}
\le C(T)\sqrt{\frac{\tau}{p-1}+\tau^2},
\end{gather}
where $C(T)$ is independent of $N$.
\end{proposition}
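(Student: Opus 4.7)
The plan is to adapt the standard RBM consistency argument of \cite{jin2020random,jin2020convergence} to the second order system. Introduce the error variables $Z^i:=\tilde{X}^i-X^i$ and $U^i:=\tilde{V}^i-V^i$; the synchronization coupling \eqref{eq:coupling} gives $Z^i(0)=U^i(0)=0$ and identical Brownian increments, so the noise terms cancel in the error equation. What remains is
\begin{gather*}
dZ^i=U^i\,dt,\qquad dU^i=\bigl[b(\tilde{X}^i)-b(X^i)-\gamma U^i + \chi^i(t)\bigr]dt,
\end{gather*}
where $\chi^i(t):=\frac{1}{p-1}\sum_{j\in\mathcal{C}_q^{(m)},j\neq i}K(\tilde{X}^i-\tilde{X}^j) - \frac{1}{N-1}\sum_{j\neq i}K(X^i-X^j)$ for $t\in[t_{m-1},t_m)$. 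The key is to split $\chi^i$ into a Lipschitz piece comparing RBM and full forces at the same positions and a consistency piece quantifying the bias of random batches.

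First I would write $\chi^i(t)=\chi_L^i(t)+\chi_C^i(t)$, where
\begin{gather*}
\chi_L^i(t):=\frac{1}{p-1}\sum_{j\in\mathcal{C}_q^{(m)},j\neq i}\bigl(K(\tilde{X}^i-\tilde{X}^j)-K(X^i-X^j)\bigr),\\
\chi_C^i(t):=\frac{1}{p-1}\sum_{j\in\mathcal{C}_q^{(m)},j\neq i}K(X^i-X^j) - \frac{1}{N-1}\sum_{j\neq i}K(X^i-X^j).
\end{gather*}
By Lipschitz continuity of $K$ and exchangeability, $\E|\chi_L^i(t)|^2\le C(\E|Z^i|^2+\E|Z^j|^2)$ for an arbitrary $j\neq i$; this controls the ``self'' drift growth. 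For $\chi_C^i(t)$, the usual RBM lemma (essentially Lemma 3.1 of \cite{jin2020random}) shows that conditioned on $\mathcal{F}_{m-1}$, the random batch choice is exchangeable and centered, giving $\E[\chi_C^i(t)\mid\mathcal{F}_{m-1}]=0$ and a variance bound $\E|\chi_C^i(t)|^2\le \Lambda/(p-1)$, where $\Lambda$ depends on moments of $K(X^i-X^j)$ which in turn require moment bounds on $X^i$, obtained from $b$ Lipschitz and the Gronwall argument for the SDE \eqref{eq:Nbody2nd}.

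Next I would form the energy $\mathcal{E}(t):=\E|Z^1(t)|^2+\E|U^1(t)|^2$ and compute its time derivative. The tricky cross term $\E[U^1\cdot\chi_C^1]$ does not vanish in one shot because $U^1(t)$ is not $\mathcal{F}_{m-1}$-measurable on $[t_{m-1},t_m)$. The standard remedy is to replace $U^1(t)$ by $U^1(t_{m-1})$ inside this inner product, using
\begin{gather*}
U^1(t)-U^1(t_{m-1})=\int_{t_{m-1}}^t\bigl[b(\tilde{X}^1)-b(X^1)-\gamma U^1+\chi^1\bigr]ds,
\end{gather*}
which is $O(\tau)$ in $L^2$ by a short a-priori estimate using polynomial growth of $|b|$ together with moments of $X^i,\tilde{X}^i$. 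The conditioned-on-$\mathcal{F}_{m-1}$ cross term $\E[U^1(t_{m-1})\cdot\chi_C^1(t)]$ vanishes because $U^1(t_{m-1})$ is $\mathcal{F}_{m-1}$-measurable, leaving a remainder of size $O(\tau)\cdot O(1/\sqrt{p-1})=O(\tau/\sqrt{p-1})$ per unit time, whose square sits at the right $\tau/(p-1)$ scale; the extra $\tau^2$ piece arises from the one-step truncation when comparing drift evaluated at $\tilde{X}^i(t)$ versus at $\tilde{X}^i(t_{m-1})$.

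Combining these estimates yields a Gronwall inequality of the form $\mathcal{E}'(t)\le C_T\mathcal{E}(t)+C_T\bigl(\tfrac{\tau}{p-1}+\tau^2\bigr)$ on $[0,T]$, giving the claim after integration. By exchangeability of the particles the bound for $(Z^1,U^1)$ transfers to all indices. The main technical obstacle is the cross-term handling: the martingale structure used in the first-order case does not work verbatim because $\chi_C^i$ enters the velocity rather than the position equation, so one must carefully substitute $U^1(t_{m-1})$ for $U^1(t)$ and absorb the error using polynomial-growth moment bounds; apart from this, the analysis closely mirrors the first-order proof in \cite{jin2020random,jin2020convergence}, which is why the authors omit the details.
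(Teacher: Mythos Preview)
Your overall strategy is exactly the one the paper points to, and it parallels what the paper carries out in full for Theorem~\ref{thm:longtimeconv}: synchronize the noise, form an $L^2$ energy, split the force discrepancy into a Lipschitz piece and a mean-zero consistency piece, and freeze the error process at $t_{m-1}$ to exploit $\E\chi_C^i=0$. The decomposition $\chi^i=\chi_L^i+\chi_C^i$ with $\chi_C^i=\chi_i(\mathfrak{X})$ at the \emph{full} process is the same as the second of the two equivalent splittings the paper records just before Lemma~\ref{lmm:Zestimate}.

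There is, however, one genuine gap in your accounting of the cross term. Bounding
\[
\E\bigl[(U^1(t)-U^1(t_{m-1}))\cdot\chi_C^1(t)\bigr]
\]
by Cauchy--Schwarz as $\|U^1(t)-U^1(t_{m-1})\|\,\|\chi_C^1(t)\|=O(\tau)\cdot O((p-1)^{-1/2})$ yields a forcing of order $\tau/\sqrt{p-1}$ in the Gronwall inequality for $\mathcal{E}$, and hence only $\sqrt{\mathcal{E}}\lesssim \tau^{1/2}(p-1)^{-1/4}$ after integration, which is strictly weaker than the stated bound. The sentence ``whose square sits at the right $\tau/(p-1)$ scale'' does not fix this: $(\tau/\sqrt{p-1})^2=\tau^2/(p-1)$, not $\tau/(p-1)$.

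The correct argument---and this is precisely how the term $I_2$ is treated in the proof of Theorem~\ref{thm:longtimeconv}---is to expand
\[
U^1(t)-U^1(t_{m-1})=\int_{t_{m-1}}^t\bigl[b(\tilde X^1)-b(X^1)-\gamma U^1+\chi_L^1(s)+\chi_C^1(s)\bigr]\,ds
\]
and dot each summand with $\chi_C^1(t)$ separately. The dominant piece is
\[
\int_{t_{m-1}}^t\E\bigl[\chi_C^1(s)\cdot\chi_C^1(t)\bigr]\,ds,
\]
which by the variance identity of Lemma~\ref{lmm:consistency} (and Cauchy--Schwarz in $s,t$) is bounded by $C\tau/(p-1)$ times a moment of $\Lambda_1(\mathfrak{X})$. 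This is where the $\tau/(p-1)$ forcing genuinely comes from. The remaining pieces contribute $C\tau\sqrt{\mathcal{E}(t)}+C\tau^2$ (here one needs moment control on $X,\tilde X$ to bound $\|\chi_C^1\|$, since $K$ is merely Lipschitz and not bounded in this proposition), and the $\tau\sqrt{\mathcal{E}}$ part is absorbed by Young's inequality. With this correction your Gronwall inequality $\mathcal{E}'\le C_T\mathcal{E}+C_T(\tau/(p-1)+\tau^2)$ is justified and the proof closes.
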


Below,  we consider the error estimate for long time. This is important if one uses RBM as a sampling method for the invariant measure of \eqref{eq:Nbody2nd}. To establish the error estimate, we need some technical assumptions that may seem restrictive for practical use. 
The following conditions will give certain contraction property for the second order systems.
\begin{assumption}\label{ass:convexity}
Suppose $b=-\nabla U$ for some $U$ that is bounded below $\inf_x U(x)>-\infty$, and there exist $\lambda_M\ge \lambda_m>0$ such that the eigenvalues of $H:=\nabla^2U$ satisfy
\[
\lambda_m \le \lambda_i(x)\le \lambda_M,~\forall~1\le i\le d, x\in \R^d.
\]
The interaction kernel $K$ is bounded and Lipschitz continuous. Moreover, the friction $\gamma$ and the Lipschitz constant $L$ of $K(\cdot)$ satisfy
\begin{gather}
 \gamma>\sqrt{\lambda_M+2L},~~\lambda_m>2L.
\end{gather}
\end{assumption}
\begin{remark}
The assumptions here are a little different from those for first order systems (\cite{jin2020random,jin2020convergence}): (1) $b$ is assumed to be Lipschitz instead of one-sided Lipschitz; (2) we are not assuming the second derivatives of $K$ to be bounded, as there is no white noise in the equations for $X_i$ so trajectories of $X_i$'s are much smoother.
\end{remark}

Under the assumptions above, we are able to establish the following uniform strong convergence estimate.
\begin{theorem}\label{thm:longtimeconv}
Under Assumption \ref{ass:convexity} and the coupling \eqref{eq:coupling}, 
the solutions to \eqref{eq:Nbody2nd} and \eqref{eq:RBM2nd} satisfy
\begin{gather}
\sup_{t\ge 0}\sqrt{\E|\tilde{X}^1(t)-X^1(t)|^2+\E|\tilde{V}^1(t)-V^1(t)|^2}
\le C\sqrt{\frac{\tau}{p-1}+\tau^2},
\end{gather}
where the constant $C$ does not depend on $p$ and $N$.
\end{theorem}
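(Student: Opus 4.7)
The plan is to combine a hypocoercive Lyapunov functional for the pathwise difference of the two underdamped Langevin trajectories with a conditional-mean-zero decomposition of the chattering residual, and to close a uniform-in-time Gr\"onwall inequality.

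Set $Z^i := \tilde X^i - X^i$ and $U^i := \tilde V^i - V^i$. Under the synchronization coupling \eqref{eq:coupling} the Brownian increments cancel, so $(Z^i, U^i)$ obeys a pathwise ODE on each $[t_{k-1}, t_k)$:
\begin{align*}
\dot Z^i &= U^i, \\
\dot U^i &= -H_i(t)\, Z^i - \gamma U^i + R^i + \chi_k^i,
\end{align*}
where $H_i(t) := \int_0^1 \nabla^2 U(X^i + s Z^i)\,ds$ has spectrum in $[\lambda_m, \lambda_M]$, $R^i := \tfrac{1}{N-1}\sum_{j\ne i}[K(\tilde X^i - \tilde X^j) - K(X^i - X^j)]$ is an $L$-Lipschitz interaction difference, and $\chi_k^i$ is the chattering term (the gap between the random-batch force and the full mean-field force at the RBM state). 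As in \cite{jin2020random,jin2020convergence}, one has $\E[\chi_k^i \mid \mathcal F_{k-1}] = 0$ and $\|\chi_k^i\| \le C/\sqrt{p-1}$ uniformly in $N$.

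Exchangeability of the particle index reduces the task to estimating the scalar Lyapunov quantity
\[
\Psi(t) := \E\bigl(\alpha |Z^1|^2 + 2\beta\, Z^1\cdot U^1 + |U^1|^2\bigr),
\]
for positive constants $\alpha > \beta^2$ to be chosen. Differentiating along the dynamics, averaging in $i$, and selecting $\alpha = \beta\gamma$ to eliminate the $Z\cdot U$ cross term produces the deterministic contribution
\[
-2(\gamma-\beta)\E|U^1|^2 - 2\beta\,\E(Z^1\cdot H_1 Z^1) - 2\,\E(U^1\cdot H_1 Z^1),
\]
to which the interaction and chattering pieces are added. The sign-indefinite term $-2U\cdot HZ$ is absorbed by Young's inequality, and the resulting quadratic form is strictly negative definite whenever $\lambda_M < 4\beta(\gamma-\beta)$; optimising in $\beta\in(0,\gamma)$ this reduces to $\gamma^2 > \lambda_M$. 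The Lipschitz perturbation $R^i$ is controlled using $\|R^i\| \le 2L\|Z^1\|$ (which follows from $|R^i|\le \tfrac{L}{N-1}\sum_{j\ne i}|Z^i-Z^j|$ and exchangeability), and contributes a term bounded by a multiple of $L(\|Z^1\|^2 + \|Z^1\|\|U^1\|)$, effectively enlarging $\lambda_M$ and shrinking $\lambda_m$ by $2L$ in the quadratic form. This is exactly why Assumption \ref{ass:convexity} requires $\gamma > \sqrt{\lambda_M + 2L}$ and $\lambda_m > 2L$: the calibration yields a contraction $\tfrac{d\Psi}{dt} \le -\kappa\Psi + (\text{chattering})$ with $\kappa > 0$ independent of $N$ and $p$.

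Finally, the chattering contributions $2\beta\,\E(Z^1\cdot\chi_k^1)$ and $2\,\E(U^1\cdot\chi_k^1)$ must be bounded with the correct $\tau$-scaling. For $t\in[t_{k-1},t_k)$ write $U^1(t) = U^1(t_{k-1}) + \int_{t_{k-1}}^t \dot U^1(s)\,ds$; conditioning the first piece on $\mathcal F_{k-1}$ and using $|\E[\chi_k^1(t)\mid\mathcal F_{k-1}]| \le C\tau$ (which follows from $\E[\chi_k^1(t_{k-1})\mid\mathcal F_{k-1}] = 0$ together with $|\chi_k^1(t)-\chi_k^1(t_{k-1})| = O(\tau)$) bounds that piece by $C\tau\|U^1\|$; the residual is handled by Cauchy-Schwarz via the a priori bound $\|\dot U^1\| \lesssim 1/\sqrt{p-1}$, yielding $C\tau/(p-1)$. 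An analogous expansion handles $Z^1\cdot\chi_k^1$. Young's inequality then gives
\[
\frac{d\Psi}{dt} \le -\tfrac{\kappa}{2}\Psi + C\Bigl(\frac{\tau}{p-1} + \tau^2\Bigr),
\]
which, combined with $\Psi(0) = 0$ from the coupling, delivers the uniform-in-time bound $\Psi(t) \le C(\tau/(p-1)+\tau^2)$ and hence the theorem. The main obstacle is the extraction of the factor $\tau$ from $\E[U^1(t_{k-1})\cdot \chi_k^1(t)]$ via the conditional-zero-mean identity: a naive Cauchy-Schwarz produces only $O(1/\sqrt{p-1})$, which is too weak to close the Gr\"onwall loop uniformly in time. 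A secondary delicate point is the simultaneous calibration of $(\alpha,\beta)$ so that the hypocoercive contraction survives both the $L$-sized interaction perturbation and the indefinite cross term $U\cdot HZ$.
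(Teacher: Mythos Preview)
Your overall strategy coincides with the paper's. The Lyapunov functional $\Psi = \E(\alpha|Z|^2 + 2\beta Z\cdot U + |U|^2)$ with $\alpha=\beta\gamma$ and the optimal choice $\beta=\gamma/2$ is, up to a scalar multiple, exactly the paper's $J=\tfrac12(\E|Z^1|^2+\E|Z^1+(2/\gamma)U^1|^2)$; your Schur-complement reading of the quadratic form leads to the same conditions $\gamma^2>\lambda_M+2L$, $\lambda_m>2L$ that the paper obtains by explicitly diagonalising the $2d\times 2d$ block matrix. The conditional-mean-zero splitting of the chattering term via $U^1(t)=U^1(t_{k-1})+\int\dot U^1$ also mirrors the paper's $I_1/I_2$ decomposition.

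There is, however, a genuine error in your handling of the residual. The claimed a priori bound $\|\dot U^1\|\lesssim 1/\sqrt{p-1}$ is false: $\dot U^1=-H_1Z^1-\gamma U^1+R^1+\chi_k^1$, and the first three summands have $L^2$ size of order $\sqrt{\Psi}$ (or simply $O(1)$ from the moment bounds), not $1/\sqrt{p-1}$. A blanket Cauchy--Schwarz on $\E[(U^1(t)-U^1(t_{k-1}))\cdot\chi_k^1(t)]$ therefore yields only $C\tau/\sqrt{p-1}$, which does not close the Gr\"onwall loop with the stated rate. The repair---and this is exactly the content of the paper's $I_{21}$ step---is to expand $\int_{t_{k-1}}^t\dot U^1$ termwise and pair each piece with $\chi$ separately: the $(-H_1Z^1-\gamma U^1+R^1)$ contribution against $\chi_1$ gives $C\tau\sqrt{\Psi}$ (using $\|\chi_1\|_\infty\le 2\|K\|_\infty$), which is absorbed by Young, while the quadratic piece $\int\chi_1\cdot\chi_1$ produces precisely $C\tau/(p-1)$ through the variance identity (Lemma~\ref{lmm:consistency}). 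The paper additionally passes from $\chi_1(\tilde{\mathfrak X})$ to $\chi_1(\mathfrak X)$ at this point so that the variance identity applies cleanly (since $\mathfrak X(s)$ is independent of the current batch), and handles the discrepancy as a separate $I_{22}$ term; you will need either this switch or an equivalent workaround. A smaller imprecision in the same vein: the assertion $|\chi_k^1(t)-\chi_k^1(t_{k-1})|=O(\tau)$ is not true pointwise, since the velocities are unbounded; it holds only in $L^2$ or in conditional expectation, which is how the paper uses it through the conditional moment bounds of Lemma~\ref{lmm:conditionalest}.
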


We give some useful lemmas for our use later.
Denote
\begin{gather}
\mathfrak{X}=(X^1,\cdots, X^N),~~~
\tilde{\mathfrak{X}}=(\tilde{X}^1,\cdots, \tilde{X}^N),
\end{gather}
and introduce the random variables $I_{ij}$ to indicate whether the two particles are in the same batch or not
\begin{gather}
I_{ij}=
\begin{cases}
1 & \exists~\mathcal{C}_q, i,j\in \mathcal{C}_q\\
0 & \text{otherwise}
\end{cases},
1\le i, j\le N.
\end{gather}

Lemma \ref{lmm:estimatesofindicator}-Lemma \ref{lmm:normofrandomsum} below are in \cite{jin2020convergence}, and we omit their proofs.
\begin{lemma}\label{lmm:estimatesofindicator}
For $i\neq j$, it holds that
\begin{gather}
\mathbb{E}I_{ij}=\frac{p-1}{N-1},
\end{gather}
and for distinct $i,j,\ell$, it holds that
\begin{gather}
\mathbb{P}(I_{ij}I_{i\ell}=1)=\E I_{ij}I_{i\ell}=\frac{(p-1)(p-2)}{(N-1)(N-2)}.
\end{gather}
\end{lemma}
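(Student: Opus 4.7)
The plan is a purely combinatorial symmetry argument, exploiting the fact that the uniformly random partition of $\{1,\ldots,N\}$ into $n=N/p$ batches of size $p$ has a distribution invariant under relabelling of particles. By this exchangeability, $\E I_{ij}$ depends only on whether $i=j$, and $\E I_{ij}I_{i\ell}$ depends only on which of $i,j,\ell$ coincide; in particular it suffices to compute $\E I_{12}$ and $\E I_{12}I_{13}$.

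For the first identity, I would double-count ordered pairs of distinct particles sharing a batch. Since each of the $n$ batches contributes exactly $p(p-1)$ such ordered pairs, the total $\sum_{i\neq j}I_{ij}$ equals $np(p-1)=N(p-1)$ deterministically. On the other hand, by exchangeability, $\sum_{i\neq j}\E I_{ij}=N(N-1)\E I_{12}$. Equating the two yields $\E I_{12}=(p-1)/(N-1)$, as claimed.

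For the second identity, observe that $I_{ij}I_{i\ell}=1$ if and only if $i,j,\ell$ all lie in the same batch (because if $i$ is batched with each of $j$ and $\ell$, then $j$ and $\ell$ are batched together), so this product is $\{0,1\}$-valued and its expectation coincides with $\bbP(I_{ij}I_{i\ell}=1)$. Summing over ordered triples of pairwise distinct indices, the deterministic count is $np(p-1)(p-2)$, while by exchangeability the expected sum equals $N(N-1)(N-2)\E I_{12}I_{13}$. Dividing gives $\E I_{12}I_{13}=\tfrac{(p-1)(p-2)}{(N-1)(N-2)}$.

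There is essentially no obstacle here: the argument is elementary combinatorics, and the only point worth a brief justification is the exchangeability of the random partition, which follows directly from the description of random grouping in Algorithm \ref{alg:rbm2nd1} as a uniform draw among all partitions of the $N$ labels into $n$ batches of size $p$. An equivalent, slightly more pedestrian route would be to condition on the batch containing $i$ and observe that, conditionally, each of the remaining $N-1$ labels is uniformly assigned to one of the $N-1$ remaining slots, of which exactly $p-1$ lie in $i$'s batch; the triple identity then follows by one further analogous conditioning step, replacing $N-1$ and $p-1$ by $N-2$ and $p-2$.
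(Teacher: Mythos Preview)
Your argument is correct and complete; both the double-counting via exchangeability and the alternative conditioning route are standard and valid. The paper itself omits the proof of this lemma, simply citing \cite{jin2020convergence}, so there is no in-paper proof to compare against.
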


For given $\mathtt{x}:=(x^1, \ldots, x^N)\in \mathbb{R}^{Nd}$, we introduce the error of the interacting force
for the $i$th particle.
\begin{gather}
\chi_i(\mathtt{x}):=\frac{1}{p-1}\sum_{j\in\mathcal{C}}K(x^i-x^j)
-\frac{1}{N-1}\sum_{j:j\neq i}K(x^i-x^j).
\end{gather}
Here, $\mathcal{C}$ is the random batch that contains $i$ in a random division of the batches.

\begin{lemma}\label{lmm:consistency}
It holds that
    \begin{gather}
        \mathbb{E}\chi_i(\mathtt{x})=0.
    \end{gather}
    Moreover, the second moment is given by
\begin{gather}\label{eq:var1}
        \mathbb{E}|\chi_i(\mathtt{x})|^2 = 
        \left(\frac{1}{p-1}-\frac{1}{N-1}\right)\Lambda_i(\mathtt{x}),
\end{gather}
    where
\begin{gather}\label{eq:Lambda}
        \Lambda_i(\mathtt{x}):=\frac{1}{N-2}
        \sum_{j: j\neq i}\Big|  K(x^i-x^j)-\frac{1}{N-1}
        \sum_{\ell: \ell\neq i}K(x^i-x^\ell)  \Big|^2.
\end{gather}
\end{lemma}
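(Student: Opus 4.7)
The unbiasedness $\mathbb{E}\chi_i(\mathtt{x})=0$ is the easy half. I would rewrite
\[
\frac{1}{p-1}\sum_{j\in\mathcal{C},\,j\neq i}K(x^i-x^j)=\frac{1}{p-1}\sum_{j:\,j\neq i}I_{ij}K(x^i-x^j),
\]
take expectation term by term, and apply $\mathbb{E}I_{ij}=(p-1)/(N-1)$ from Lemma \ref{lmm:estimatesofindicator}. The $(p-1)$'s cancel and one gets exactly the full-sum average, so the difference vanishes in expectation.

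For the variance, the plan is to use the identity $\sum_{j\neq i}I_{ij}=p-1$ to recenter the summands. Set $\bar K_i:=\frac{1}{N-1}\sum_{j\neq i}K(x^i-x^j)$ and $\widetilde K_{ij}:=K(x^i-x^j)-\bar K_i$, so that $\sum_{j\neq i}\widetilde K_{ij}=0$. Since $\frac{1}{p-1}\sum_{j\neq i}I_{ij}\bar K_i=\bar K_i$, one can rewrite
\[
\chi_i(\mathtt{x})=\frac{1}{p-1}\sum_{j:\,j\neq i}I_{ij}\,\widetilde K_{ij}.
\]
This centering is the key move: it lets us exploit the zero-sum property later.

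Expanding the square gives diagonal terms $\sum_{j\neq i}I_{ij}^2|\widetilde K_{ij}|^2=\sum_{j\neq i}I_{ij}|\widetilde K_{ij}|^2$ (using $I_{ij}\in\{0,1\}$) and off-diagonal terms $\sum_{j\neq \ell,\,j,\ell\neq i}I_{ij}I_{i\ell}\,\widetilde K_{ij}\cdot\widetilde K_{i\ell}$. Applying Lemma \ref{lmm:estimatesofindicator} termwise yields
\[
\mathbb{E}|\chi_i|^2=\frac{1}{(p-1)^2}\Bigl[\tfrac{p-1}{N-1}\sum_{j\neq i}|\widetilde K_{ij}|^2+\tfrac{(p-1)(p-2)}{(N-1)(N-2)}\sum_{\substack{j,\ell\neq i\\ j\neq\ell}}\widetilde K_{ij}\cdot\widetilde K_{i\ell}\Bigr].
\]
Now the zero-sum property pays off: $\sum_{j\neq\ell,\,j,\ell\neq i}\widetilde K_{ij}\cdot\widetilde K_{i\ell}=\bigl|\sum_{j\neq i}\widetilde K_{ij}\bigr|^2-\sum_{j\neq i}|\widetilde K_{ij}|^2=-\sum_{j\neq i}|\widetilde K_{ij}|^2$.

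Substituting and simplifying the prefactor gives
\[
\mathbb{E}|\chi_i|^2=\frac{1}{(p-1)(N-1)}\Bigl[1-\tfrac{p-2}{N-2}\Bigr]\sum_{j\neq i}|\widetilde K_{ij}|^2=\frac{N-p}{(p-1)(N-1)(N-2)}\sum_{j\neq i}|\widetilde K_{ij}|^2,
\]
and since $\frac{1}{p-1}-\frac{1}{N-1}=\frac{N-p}{(p-1)(N-1)}$, the claimed identity \eqref{eq:var1}--\eqref{eq:Lambda} follows. There is no real obstacle here; the only subtlety is recognizing that the recentering by $\bar K_i$ is what makes the off-diagonal contribution combine with the diagonal into a single clean expression.
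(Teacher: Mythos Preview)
Your argument is correct in every step: the indicator rewriting, the recentering via $\sum_{j\neq i}I_{ij}=p-1$, the expansion of the square, the use of Lemma~\ref{lmm:estimatesofindicator} for the diagonal and off-diagonal moments, and the final algebraic simplification all check out. The paper itself does not supply a proof of this lemma (it is stated as taken from \cite{jin2020convergence} with the proof omitted), so there is no in-paper argument to compare against; your proof is precisely the standard one and would be suitable here.
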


\begin{lemma}\label{lmm:normofrandomsum}
Fix $i\in\{1,\ldots, N\}$. Let $\mathcal{C}_{\theta}$ be the random batch of size $p$ that contains $i$ in the random division. 
Let $Y_j$ ($1\le j\le N$) be $N$ random variables (or random vectors) that are independent of $\mathcal{C}_{\theta}$. Then, for $p\ge 2$,
\begin{gather}
\left\|\frac{1}{p-1}\sum_{j\in\mathcal{C}_{\theta},j\neq i}Y_j\right\|\le 
\left(\frac{1}{N-1}\sum_{j: j\neq i}\|Y_j\|^2\right)^{1/2}.
\end{gather}
\end{lemma}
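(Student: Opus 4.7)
The plan is to bound the squared $L^2$ norm by a conditional Cauchy--Schwarz applied inside the batch, and then compute the resulting expectation using the independence of the batch from the $Y_j$'s together with the indicator moment from Lemma \ref{lmm:estimatesofindicator}. Writing the inequality with $\|\cdot\|^2 = \E|\cdot|^2$, it suffices to show
\[
\E\left|\frac{1}{p-1}\sum_{j\in\mathcal{C}_{\theta},\,j\neq i}Y_j\right|^2 \le \frac{1}{N-1}\sum_{j:j\neq i}\E|Y_j|^2.
\]

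First I would introduce the batch indicators $\xi_j := \mathbf{1}_{\{j\in\mathcal{C}_{\theta}\}}$ for $j \neq i$, which by construction satisfy $\sum_{j\neq i}\xi_j = p-1$, and rewrite the sum of interest as $\frac{1}{p-1}\sum_{j\neq i}\xi_j Y_j$. Then I would apply the Cauchy--Schwarz (equivalently, Jensen's) inequality pathwise: since exactly $p-1$ of the $\xi_j$ equal $1$,
\[
\left|\frac{1}{p-1}\sum_{j\neq i}\xi_j Y_j\right|^2 \le \frac{1}{(p-1)^2}\cdot (p-1)\sum_{j\neq i}\xi_j |Y_j|^2 = \frac{1}{p-1}\sum_{j\neq i}\xi_j |Y_j|^2.
\]

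Next I would take expectations and exploit the key assumption that $\mathcal{C}_{\theta}$ (and hence each $\xi_j$) is independent of the $Y_j$'s:
\[
\E\left[\xi_j |Y_j|^2\right] = \E[\xi_j]\,\E|Y_j|^2 = \frac{p-1}{N-1}\|Y_j\|^2,
\]
where the first moment $\E[\xi_j]=(p-1)/(N-1)$ is exactly the content of Lemma \ref{lmm:estimatesofindicator}. Summing over $j\neq i$ and combining with the previous display yields
\[
\E\left|\frac{1}{p-1}\sum_{j\in\mathcal{C}_{\theta},\,j\neq i}Y_j\right|^2 \le \frac{1}{p-1}\cdot\frac{p-1}{N-1}\sum_{j\neq i}\|Y_j\|^2 = \frac{1}{N-1}\sum_{j\neq i}\|Y_j\|^2,
\]
and taking square roots gives the claim.

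There is no real obstacle here: the inequality is essentially a Jensen-plus-linearity-of-expectation argument, and the only subtlety is to keep track of which randomness sources are independent so that the factorization $\E[\xi_j|Y_j|^2] = \E[\xi_j]\E|Y_j|^2$ is legitimate. One small point worth stating explicitly in the write-up is that the pathwise bound uses the deterministic identity $\sum_{j\neq i}\xi_j = p-1$ (so no higher-order moments such as $\E[\xi_j\xi_k]$ are ever needed), which is why only the first moment from Lemma \ref{lmm:estimatesofindicator} appears and why the hypothesis $p\ge 2$ (ensuring $p-1\ge 1$) is enough.
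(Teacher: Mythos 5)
Your proof is correct. Note that the paper itself does not prove this lemma --- it states that Lemmas \ref{lmm:estimatesofindicator}--\ref{lmm:normofrandomsum} are taken from the cited reference and omits the proofs --- so the comparison is with the standard argument there. That argument typically expands the square,
\[
\E\Bigl|\tfrac{1}{p-1}\sum_{j\neq i}\xi_jY_j\Bigr|^2=\frac{1}{(p-1)^2}\sum_{j,\ell\neq i}\E[\xi_j\xi_\ell]\,\E[Y_j\cdot Y_\ell],
\]
and uses \emph{both} moments of the indicators from Lemma \ref{lmm:estimatesofindicator} (namely $\E[\xi_j]=\frac{p-1}{N-1}$ and $\E[\xi_j\xi_\ell]=\frac{(p-1)(p-2)}{(N-1)(N-2)}$ for $j\neq\ell$), together with $\E[Y_j\cdot Y_\ell]\le\frac12(\|Y_j\|^2+\|Y_\ell\|^2)$, which after summation gives exactly $\frac{1}{N-1}\sum_{j\neq i}\|Y_j\|^2$. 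Your route is genuinely different and somewhat cleaner: by applying Jensen/Cauchy--Schwarz pathwise to the average of the $p-1$ batch members before taking expectations, you reduce everything to the first moment $\E[\xi_j]$ and never need the pairwise indicator correlations. Both arguments hinge on the same independence of $\mathcal{C}_{\theta}$ from the $Y_j$'s (without which the factorization $\E[\xi_j|Y_j|^2]=\E[\xi_j]\E|Y_j|^2$ fails), and both yield the same sharp constant, so your proof is a valid self-contained replacement for the omitted one.
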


Below, we establish some moment estimates so that we can establish the stability for the Random Batch Method and thus prove convergence.
\begin{lemma}\label{lmm:conditionalest}
Under Assumption \ref{ass:convexity}, it holds for $q\ge 1$ that
\begin{gather}\label{eq:uniformmoments}
\sup_{t>0}\Big(\E(|X^i(t)|^q+|V^i(t)|^q)+\mathbb{E}(|\tilde{X}^i(t)|^q+|\tilde{V}^i(t)|^q)\Big) \le C_q.
\end{gather}
Besides, for any $k>0$ and $q\ge 2$,
\begin{gather}\label{eq:conditionmoment}
\sup_{t\in [t_{k-1}, t_k)}\left|\mathbb{E}(|\tilde{X}^i(t)|^q+|\tilde{V}^i(t)|^q |\mathcal{F}_{k-1})\right|
\le C(1+|\tilde{X}^i(t_{k-1})|^q+|\tilde{V}^i(t_{k-1})|^q)
\end{gather}
holds almost surely.
\end{lemma}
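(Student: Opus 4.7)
The plan is to split the statement into its two parts and handle each with a different tool. The uniform-in-time moment bound in \eqref{eq:uniformmoments} requires a Lyapunov argument that exploits the contraction built into Assumption \ref{ass:convexity} and is robust to the boundedness of the interaction kernel $K$. The short-time conditional estimate \eqref{eq:conditionmoment} is, by contrast, a soft consequence of a one-step It\^o-Gronwall calculation on the SDE \eqref{eq:RBM2nd} restricted to a single batch.

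For the uniform bound I would work with a modified energy function that accounts for the degeneracy of the noise (absent in the $X$-component). A convenient choice is
\begin{gather*}
\Phi(x,v) := U(x) - \inf U + \tfrac{1}{2}|v|^2 + \eta\langle x, v\rangle + C_0,
\end{gather*}
with $\eta>0$ small and $C_0$ large enough that $\Phi \asymp 1 + |x|^2 + |v|^2$; the two-sided bound on $\nabla^2 U$ gives the lower comparison. Let $\mathcal{L}_{\mathcal{C}}$ be the generator of the RBM dynamics on the subinterval where the random batch is $\mathcal{C}$. Since $K$ is bounded by some $M_K$, the RBM interaction produces only an $O(1)$ contribution; the friction gives $-\gamma|v|^2$; and the cross term $\eta\langle x,v\rangle$ yields $\eta|v|^2 - \eta\langle x, \nabla U(x)\rangle + O(M_K|x|) \le \eta|v|^2 - \eta\lambda_m|x|^2 + O(M_K|x|)$ by strong convexity. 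Using Young's inequality and choosing $\eta$ small (the hypotheses $\gamma>\sqrt{\lambda_M+2L}$ and $\lambda_m>2L$ leave room for this), I expect to reach
\begin{gather*}
\mathcal{L}_{\mathcal{C}}\Phi \le -c\,\Phi + C
\end{gather*}
with $c, C$ independent of $\mathcal{C}$, $N$, and $p$. The same bound holds for the true dynamics \eqref{eq:Nbody2nd}. Iterating, one shows $\mathcal{L}_{\mathcal{C}}\Phi^m \le -c_m\,\Phi^m + C_m$ for every integer $m\ge 1$; taking expectations and applying Gronwall across adjacent subintervals (using the law of total expectation over the batch choice) gives $\sup_t \E\Phi^m(\tilde X^i(t), \tilde V^i(t)) \le C_m$, and the corresponding bound for $(X^i, V^i)$. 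The comparison $|x|^q + |v|^q \lesssim 1 + \Phi^{\lceil q/2\rceil}$ yields \eqref{eq:uniformmoments} for every $q\ge 1$.

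For the one-step conditional estimate, fix $t\in [t_{k-1}, t_k)$ and apply It\^o's formula to $|\tilde X^i|^q + |\tilde V^i|^q$ along \eqref{eq:RBM2nd}. The drift terms satisfy $|b(\tilde X^i)| \le C(1+|\tilde X^i|)$ (linear growth from $\nabla^2 U$ bounded), $|\gamma \tilde V^i| \le \gamma|\tilde V^i|$, and the RBM interaction is uniformly bounded by $M_K$ because $K$ is bounded and the prefactor $\frac{N-1}{p-1}\cdot \frac{1}{N-1}\sum$ in \eqref{eq:RBM2nd} averages at most $p-1$ bounded terms. The It\^o correction from the Brownian noise contributes a term of order $|\tilde V^i|^{q-2}$ (where $q\ge 2$ is used). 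Altogether
\begin{gather*}
d\bigl(|\tilde X^i|^q + |\tilde V^i|^q\bigr) \le C\bigl(1 + |\tilde X^i|^q + |\tilde V^i|^q\bigr)\,dt + dM_t,
\end{gather*}
where $M_t$ is a local martingale. Taking conditional expectation given $\mathcal{F}_{k-1}$ (which encodes the batch assignment for the current subinterval, making the coefficients adapted), localizing in the usual way to dispose of $M_t$, and applying Gronwall on the interval of length at most $\tau$ yields \eqref{eq:conditionmoment}.

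The main obstacle is the Lyapunov inequality $\mathcal{L}_{\mathcal{C}}\Phi \le -c\Phi + C$, and in particular ensuring that the cross coupling $\eta\langle x,v\rangle$ together with the boundedness-limited contribution of $K$ leaves enough room between the degeneracy of the $X$-equation and the contractive $-\gamma|v|^2$ term. This is precisely where the quantitative thresholds $\gamma>\sqrt{\lambda_M+2L}$ and $\lambda_m>2L$ are consumed. Once the quadratic case is in hand, promoting to $\Phi^m$ and then to general $q$ is algebraic.
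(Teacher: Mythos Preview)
Your proposal is correct and tracks the paper's argument closely: both rest on a hypocoercive Lyapunov function with a position--velocity cross term, show $\mathcal{L}\Phi\le -c\Phi+C$, and then lift to powers via $|\nabla_v\Phi|^2\lesssim\Phi$. The paper's specific choice is $\ell(x,v)=\tfrac12(|x|^2+|x+\alpha v|^2)+\alpha^2U(x)$ with $\alpha=2/\gamma$ (following \cite{mattingly2002ergodicity}), which has the advantage of killing the $x\cdot v$ cross term in $\mathcal{L}\ell$ exactly and thereby shortening the computation; your $\Phi=U+\tfrac12|v|^2+\eta\langle x,v\rangle+C_0$ works equally well once $\eta$ is tuned. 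One small correction: the Lyapunov step actually consumes only $\lambda_m>0$, $\gamma>0$, and the \emph{boundedness} of $K$ (so the batch interaction contributes $O(|x|+|v|)$); the sharper thresholds $\gamma>\sqrt{\lambda_M+2L}$ and $\lambda_m>2L$ are reserved for the contraction estimate in Theorem~\ref{thm:longtimeconv}, not for these moment bounds. For \eqref{eq:conditionmoment} the paper simply reuses the same Lyapunov differential inequality conditionally on $\mathcal{F}_{k-1}$, whereas your direct It\^o--Gronwall on $|\tilde X^i|^q+|\tilde V^i|^q$ is a legitimate and slightly more elementary alternative that exploits only the short time horizon $\tau$.
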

\begin{proof}
Here, we show the moment bounds for $(\tilde{X}^i, \tilde{V}^i)$ only, as the estimates for the moments of $(X^i, V^i)$ are similar (and easier).

Following \cite[section 3]{mattingly2002ergodicity}, we consider the Lyapunov function
\[
\ell(\tilde{X}^i, \tilde{V}^i):=\frac{1}{2}(|\tilde{X}^i|^2+|\tilde{X}^i+\alpha \tilde{V}^i|^2)+\alpha^2 U(\tilde{X}^i).
\]
By Assumption \ref{ass:convexity}, we can assume without loss of generality that
\[
\inf_x U(x)=0.
\]
Due to Assumption \ref{ass:convexity}, the second moments of $\tilde{X}^i$ and $\tilde{V}^i$ can be controlled easily by this Lyapunov function.

Then, by It\^o's formula, for any $r\ge1$, and $t\in [t_{k-1}, t_k)$,
\[
\frac{d}{dt}\E\left[ [\ell(\tilde{X}^i, \tilde{V}^i)]^r|\mathcal{F}_{k-1}\right]
=\E[\cL[\ell(\tilde{X}^i, \tilde{V}^i)]^r |\mathcal{F}_{k-1}],
\]
where $\cL$ is the generator for the SDE \eqref{eq:RBM2nd} given by
\begin{multline}
\cL=\sum_{i=1}^N v^i\cdot \nabla_{x^i}+\sum_{i=1}^N\left(-\nabla U(x^i)+\frac{1}{p-1}\sum_{j\in \mathcal{C}_{\theta},j\neq i}K(x^i-x^j)
-\gamma v^i\right)\cdot\nabla_{v^i}\\
+\frac{1}{2}\sum_{i=1}^N\sigma^2\Delta_{v_i}.
\end{multline}
Note that $x^i\in \R^d$ and $v^i\in \R^d$.
Direct computation reveals that
\[
\cL [\ell(x^i,v^i)]^r=r[\ell(x^i,v^i)]^{r-1}\cL\ell(x^i,v^i)
+\frac{\sigma^2}{2}r(r-1)[\ell(x^i,v^i)]^{r-2}|\nabla_{v^i}\ell|^2.
\]

Clearly, $|\nabla_{v^i}\ell|^2\le \chi(\alpha)\ell(x^i,v^i)$ for some number $\chi$ depending on $\alpha$. That means
\[
\frac{\sigma^2}{2}r(r-1)[\ell(x^i,v^i)]^{r-2}|\nabla_{v^i}\ell|^2\le C[\ell(x^i,v^i)]^{r-1}.
\]
The power $r-1$ indicates that this term can be controlled without difficulty. Moreover,
\begin{align*}
\cL\ell(x^i, v^i)=&v^i\cdot\left(x^i+(x^i+\alpha v^i)+\alpha^2\nabla U \right)\\
&+\left[-\nabla U(x^i)-\gamma v^i+\frac{1}{p-1}\sum_{j\in \mathcal{C}_{\theta},j\neq i}K(x^i-x^j)\right]\cdot \alpha(x^i+\alpha v^i)+\frac{1}{2}\sigma^2 \alpha^2 d\\
=&-\alpha x^i\cdot\nabla U(x^i)+(\alpha-\alpha^2\gamma)|v^i|^2+(2-\alpha \gamma)x^i\cdot v^i \\
&+\frac{\alpha}{p-1}x^i\cdot\sum_{j\in \mathcal{C},j\neq i}K(x^i-x^j)
+\frac{\alpha^2}{p-1}v^i\cdot\sum_{j\in \mathcal{C},j\neq i}K(x^i-x^j)
+\frac{1}{2}\sigma^2 \alpha^2 d.
\end{align*}
Taking $\alpha=2/\gamma$, one finds that
\[
\cL\ell(x^i, v^i)\le -\frac{2}{\gamma} \lambda_m |x^i|^2
-\frac{2}{\gamma}|v^i|^2
+C(|x^i|+|v^i|+1)\le -\beta \ell+C,
\]
for some $\beta>0$.
Hence,
\[
\frac{d}{dt}\E\left[ [\ell(\tilde{X}^i, \tilde{V}^i)]^r|\mathcal{F}_{k-1}\right]
\le -r\beta\E\left[ [\ell(\tilde{X}^i, \tilde{V}^i)]^r|\mathcal{F}_{k-1}\right]
+C\E\left[ [\ell(\tilde{X}^i, \tilde{V}^i)]^{r-1}|\mathcal{F}_{k-1}\right].
\]
Using the fact that $U(x^i)\le C(1+|x^i|^2)$ (since $\|\nabla^2U\|_2\le \lambda_M$), \eqref{eq:conditionmoment} then follows easily. 

Similarly,  taking full expectation leads to
\[
\frac{d}{dt}\E\left[ [\ell(\tilde{X}^i, \tilde{V}^i)]^r\right]
\le -r\beta\E\left[ [\ell(\tilde{X}^i, \tilde{V}^i)]^r\right]
+C\E\left[ [\ell(\tilde{X}^i, \tilde{V}^i)]^{r-1}\right].
\]
The moments control \eqref{eq:uniformmoments} then follows.
\end{proof}

Next, we introduce some notations for better presentation.
First of all, due to the degeneracy of the white noise in the equations for $X_i$'s, the generator of the underdamped Langevin does not have the ellipticity. The proof of the ergodicity for the corresponding Fokker-Planck equation relies on the hypocoercivity \cite{villani2006hypocoercivity,dolbeault2015hypocoercivity} and one needs to use the transport term to compensate the degeneracy. In terms of particle formulation (SDEs), one may consider the following variables to compensate the degeneracy (see \cite[section 3]{mattingly2002ergodicity} or \cite{eberle2019couplings}):
\begin{gather}
Z^i:=\tilde{X}^i-X^i,~~\hat{Z}^i:=\tilde{X}^i-X^i+\alpha 
(\tilde{V}^i-V^i).
\end{gather}
Moreover, we denote
\begin{gather}
\delta K_{ij}(t):=K(\tilde{X}^i(t)-\tilde{X}^j(t))-K(X^i(t)-X^j(t)).
\end{gather}
Using this notation, we can conveniently write
\begin{gather}
\begin{split}
\frac{1}{p-1}\sum_{j\in \mathcal{C}_\theta j\neq i}K(\tilde{X}^i-\tilde{X}^j)-\frac{1}{N-1}\sum_{j\neq i}K(X^i-X^j)
&=\frac{1}{N-1}\sum_{j\neq i}\delta K_{ij}+\chi_{i}(\tilde{\mathfrak{X}})\\
&=\frac{1}{p-1}\sum_{j\in \mathcal{C}_{\theta},j\neq i}\delta K_{ij}+\chi_i(\mathfrak{X}).
\end{split}
\end{gather}

By the definition of $\hat{Z}^i$,
\[
\frac{d}{dt}\hat{Z}^i=(1-\alpha \gamma)(\tilde{V}^i(t)-V^i(t))
-\alpha\Big(\nabla U(\tilde{X}^i)-\nabla U(X^i)\Big)
+\frac{1}{p-1}\sum_{j\in \mathcal{C}_{\theta},j\neq i}\delta K_{ij}
+\chi_i(\mathfrak{X}).
\]

\begin{lemma}\label{lmm:Zestimate}
Suppose Assumption \ref{ass:convexity} holds. For $t\in [t_{k-1}, t_k)$,
\begin{gather}
\|Z^i(t)-Z^i(t_{k-1})\|+\|\hat{Z}^i(t)-\hat{Z}^i(t_{k-1})\|\le C \tau.
\end{gather}
Also, almost surely, it holds that for $\tau\le 1$,
\begin{gather}
 |Z^i(t)|+|\hat{Z}^i(t)| \le ( |Z^i(t_{k-1})| +  |\hat{Z}^i(t_{k-1})| )(1+C\tau)+C\tau.
\end{gather}
\end{lemma}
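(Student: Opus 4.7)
The plan is to exploit the synchronization coupling \eqref{eq:coupling}: since $W^i=\tilde W^i$, the Brownian increments cancel in the difference equations, so that $Z^i$ and $\hat Z^i$ satisfy \emph{pathwise} ODEs (for a.e.\ $\omega\in\Omega$) on $[t_{k-1},t_k)$. Concretely, $\frac{d}{dt}Z^i=\tilde V^i-V^i=\alpha^{-1}(\hat Z^i-Z^i)$, while the evolution of $\hat Z^i$ is the display written immediately before the lemma. Integrating in $s$ from $t_{k-1}$ to $t$ reduces both bounds to estimating the driving terms on an interval of length at most $\tau$.

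For the $L^2(\Omega)$ estimate I would apply Minkowski's inequality to the integral representations. First, $\|\tilde V^i-V^i\|\le\|\tilde V^i\|+\|V^i\|$ is uniformly bounded in $t$ by Lemma \ref{lmm:conditionalest}. Since $\|\nabla^2 U\|\le\lambda_M$ gives $\nabla U$ at most linear growth, the same moment bound yields $\|\nabla U(\tilde X^i)-\nabla U(X^i)\|\le C$. Finally, both $|\delta K_{ij}|$ and $|\chi_i(\mathfrak X)|$ are bounded pointwise by $2\|K\|_\infty$ because $K$ is bounded by Assumption \ref{ass:convexity}. Multiplying each bound by $(t-t_{k-1})\le\tau$ and summing yields the claimed $C\tau$ inequality for $\|Z^i(t)-Z^i(t_{k-1})\|+\|\hat Z^i(t)-\hat Z^i(t_{k-1})\|$.

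For the almost-sure bound I would repeat the argument pointwise in $\omega$, but now use the Lipschitz estimate $|\nabla U(\tilde X^i)-\nabla U(X^i)|\le\lambda_M|Z^i|$ and the identity $\tilde V^i-V^i=\alpha^{-1}(\hat Z^i-Z^i)$, together with the uniform pointwise bounds on the interaction terms, to arrive at
\begin{equation*}
|Z^i(t)|+|\hat Z^i(t)|\le|Z^i(t_{k-1})|+|\hat Z^i(t_{k-1})|+\int_{t_{k-1}}^t\Big[C_1\big(|Z^i(s)|+|\hat Z^i(s)|\big)+C_2\Big]\,ds.
\end{equation*}
Gr\"onwall's inequality applied to $\psi(t):=|Z^i(t)|+|\hat Z^i(t)|$ gives $\psi(t)\le\psi(t_{k-1})e^{C\tau}+C\tau$, and for $\tau\le 1$ the elementary estimate $e^{C\tau}\le 1+C'\tau$ delivers the stated form.

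No serious obstacle is anticipated: the synchronization coupling removes the stochastic integrals entirely, and the boundedness of $K$ makes every interaction contribution uniformly bounded irrespective of how the random batch $\mathcal C_\theta$ is formed. The only delicate point is tracking that the constants $C_1,C_2,C,C'$ are independent of $k$, $N$ and $p$; this is immediate since $\|K\|_\infty$, $\lambda_M$, $\gamma$, $\alpha=2/\gamma$, and the moment bounds in Lemma \ref{lmm:conditionalest} are all themselves independent of these parameters.
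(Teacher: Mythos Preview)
Your proposal is correct and follows essentially the same route as the paper. The paper dispatches the $L^2$ bound in one sentence (``an easy consequence of the moment control in \eqref{eq:uniformmoments}'') and, for the almost-sure bound, writes the differential inequalities $\frac{d}{dt}|Z^i|\le\alpha^{-1}|\hat Z^i-Z^i|$ and the analogous one for $|\hat Z^i|$, sums them to obtain $\frac{d}{dt}(|Z^i|+|\hat Z^i|)\le C(|Z^i|+|\hat Z^i|)+C$, and concludes; your integral-form Gr\"onwall argument is the same computation written slightly differently, with the same inputs (cancellation of the noise via the coupling, Lipschitz bound $|\nabla U(\tilde X^i)-\nabla U(X^i)|\le\lambda_M|Z^i|$, and the pointwise bound $2\|K\|_\infty$ on the interaction terms).
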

\begin{proof}
The first part is an easy consequence of the moment control in \eqref{eq:uniformmoments}. 
Direct computation shows that
\[
\begin{split}
 & \frac{d}{dt}|Z^i|\le \frac{1}{\alpha}|\hat{Z}^i-Z^i|,\\
 & \frac{d}{dt}|\hat{Z}^i|
 \le   \Big|\frac{(1-\alpha\gamma)(\hat{Z}^i-Z^i)}{\alpha}+\alpha
 \Big(-\nabla U(\tilde{X}^i)+\nabla U(X^i)
 +\frac{1}{p-1}\sum_{j\in \mathcal{C}_{\theta},j\neq i}\delta K_{ij}
+\chi_i(\mathfrak{X})\Big)\Big|.
 \end{split}
\]
Hence, one has
\[
\frac{d}{dt}(|Z^i|+|\hat{Z}^i|)
\le C(|Z^i|+|\hat{Z}^i|)+C.
\]
The claim then follows.
\end{proof}

\begin{proof}[Proof of Theorem \ref{thm:longtimeconv}]
We aim to estimate how the quantity evolves
\begin{gather}
\begin{split}
u(t) &:=\frac{1}{N}\sum_{i=1}^N (\E|\tilde{X}^i-X^i|^2
+\E|V^i-\tilde{V}^i|^2) \\
&= \E|\tilde{X}^1-X^1|^2+\E|\tilde{V}^1-V^1|^2.
\end{split}
\end{gather}
Direct estimation of this quantity is not easy. As mentioned above already, we consider the following motivated by \cite{eberle2019couplings}:
\begin{gather}
\begin{split}
J(t) &:=\frac{1}{2}(\E |Z^1|^2+\E|\hat{Z}^1|^2)\\
&=\frac{1}{2}\left[\E|\tilde{X}^1-X^1|^2+\E|\tilde{X}^1-X^1+\alpha 
(\tilde{V}^1-V^1)|^2\right],
\end{split}
\end{gather}
where $\alpha$ is to be determined later. Then, $J$ is equivalent to $u$ but it can be treated more easily as we shall see below.

\subsubsection*{Step 1. Contraction}

Recall
\[
\frac{1}{p-1}\sum_{j\in \mathcal{C}_{\theta}, j\neq 1}K(\tilde{X}^1-\tilde{X}^j)-\frac{1}{N-1}\sum_{j\neq 1}K(X^1-X^j)
=\frac{1}{N-1}\sum_{j\neq 1}\delta K_{1j}+\chi_{1}(\tilde{\mathfrak{X}}),
\]
where $\mathcal{C}_{\theta}$ again is the random batch that contains particle $1$ and define
\begin{gather}\label{eq:B}
B(\tilde{X}^1, X^1):=\int_0^1\nabla^2U(s \tilde{X}^1+(1-s)X^1)\,ds.
\end{gather}
Direct computation yields
\begin{multline*}
\frac{d}{dt}J=\alpha^{-1}\E Z^1\cdot(\hat{Z}^1-Z^1)
+\alpha\E\hat{Z}^1\cdot\Bigg(\alpha^{-2}(\hat{Z}^1-Z^1)-B\cdot Z^1\\
+\frac{1}{N-1}\sum_{j\neq 1}\delta K_{1j}+\chi_{1}(\tilde{\mathfrak{X}})
-\frac{\gamma}{\alpha}(\hat{Z}^1-Z^1)\Bigg).
\end{multline*}

By symmetry, $\|Z^j\|=\|Z^1\|$, and thus
\[
\E \hat{Z}^1\cdot \frac{1}{N-1}\sum_{j\neq 1}\delta K_{1j}
\le L (\|\hat{Z}^1\| \|Z^1\|+\sum_{j\neq 1}\|\hat{Z}^1\| \|Z^j\|) \le 2L J(t).
\]

Then,
\begin{multline}\label{eq:dotJinter1}
\dot{J}
\le -\E\left( [Z^1, \hat{Z}^1]^T
\left[\begin{array}{cc}
\alpha^{-1}I_d & \frac{1}{2}(\alpha B-\gamma I_d)\\
\frac{1}{2}(\alpha B-\gamma I_d) & \gamma-\alpha^{-1}I_d
\end{array}
\right]
\left[\begin{array}{c}
Z^1\\
\hat{Z}^1
\end{array}
\right]\right) \\
+2\alpha  L J(t)+\alpha \E \hat{Z}^1\cdot \chi_1(\tilde{\mathfrak{X}}).
\end{multline}

Let the eigenvalues of $B$ be $\tilde{\lambda}_i$, which are
bounded below and above by $\lambda_m$, $\lambda_M$ respectively.
The eigenvalues of the matrix in \eqref{eq:dotJinter1} are given by
\[
\mu_{i,\pm}=\frac{1}{2}(\gamma\pm \sqrt{\gamma^2+4\alpha^{-1}
(\alpha^{-1}-\gamma)+(\alpha\tilde{\lambda}_i-\gamma)^2}).
\]
Choosing $\alpha=2/\gamma$,
so that all the eigenvalues are $
\left\{ \tilde{\lambda}_i/\gamma, \gamma-\tilde{\lambda}_i/\gamma \right\}_{i=1}^d$.
Hence,
\begin{gather*}
\dot{J}\le -\frac{2}{\gamma}\left[\min(\lambda_m-2L, \gamma^2-(\lambda_M+2L)) \right]J(t)+\frac{2}{\gamma}\E\hat{Z}\cdot\chi_1(\tilde{\mathfrak{X}}).
\end{gather*}
Under Assumption \ref{ass:convexity}, the coefficient of $J(t)$ on the right hand side is negative so that the Langevin dynamics has contraction property.

\subsubsection*{Step 2. The local error estimate. }

We now estimate the local error term
$\E\hat{Z}^1\cdot\chi_1(\tilde{\mathfrak{X}}(t))$, which we decompose as
\[
\begin{split}
\E\hat{Z}^1\cdot\chi_1(\tilde{\mathfrak{X}}(t)) &=
\E\hat{Z}^1(t_{k-1})\cdot\chi_1(\tilde{\mathfrak{X}}(t))
+\E[\hat{Z}^1(t)-\hat{Z}^1(t_{k-1})]\cdot\chi_1(\tilde{\mathfrak{X}}(t)) \\
&=: I_1+I_2.
\end{split}
\]
\subsubsection*{Substep 2.1. Estimation of $I_1$}

Using the consistency result Lemma \ref{lmm:consistency},
\[
\E\hat{Z}^1(t_{k-1})\cdot\chi_1(\tilde{\mathfrak{X}}(t_{k-1}))=0,
\]
one has
\[
\begin{split}
I_1&=\E\hat{Z}^1(t_{k-1})\cdot[\chi_1(\tilde{\mathfrak{X}}(t))
-\chi_1(\tilde{\mathfrak{X}}(t_{k-1}))]\\
&=\E\left(\hat{Z}^1(t_{k-1})\cdot\E[\chi_1(\tilde{\mathfrak{X}}(t))-\chi_1(\tilde{\mathfrak{X}}(t_{k-1}))|\mathcal{F}_{k-1}]\right)\\
&\le \|\hat{Z}^1(t_{k-1})\|
\left\|\E[\chi_1(\tilde{\mathfrak{X}}(t))-\chi_1(\tilde{\mathfrak{X}}(t_{k-1}))|\mathcal{F}_{k-1}] \right\|.
\end{split}
\]
Introducing
\begin{gather*}
\begin{split}
& \delta \tilde{K}^{1j}:=K(\tilde{X}^1(t)-\tilde{X}^j(t))
-K(\tilde{X}^1(t_{k-1})-\tilde{X}^j(t_{k-1})),\\
& \delta \tilde{X}^j=\tilde{X}^j(t)-\tilde{X}^j(t_{k-1}),
\end{split}
\end{gather*}
one has
\begin{gather}\label{eq:step21aux1}
\begin{split}
& |\E[\chi_1(\tilde{X}(t))-\chi_1(\tilde{X}(t_{k-1}))|\mathcal{F}_{k-1}]| \\
&\le \frac{1}{p-1}\sum_{j\in \mathcal{C}_{\theta},j\neq 1}
|\E(\delta \tilde{K}^{1j}(t)|\mathcal{F}_{k-1})|
+\frac{1}{N-1}\sum_{j\neq 1}
|\E(\delta \tilde{K}^{1j}(t)|\mathcal{F}_{k-1})|.
\end{split}
\end{gather}

Now, we estimate
\begin{gather*}
\left\| \frac{1}{p-1}\sum_{j\in \mathcal{C}_{\theta},j\neq 1}
|\E(\delta \tilde{K}^{1j}(t)|\mathcal{F}_{k-1})| \right\|
\le L\| \E(\delta \tilde{X}^1|\mathcal{F}_{k-1}) \|
+\frac{L}{p-1} \left\| \sum_{j\in \mathcal{C}_{\theta},j\neq 1}|\E(\delta \tilde{X}^j|\mathcal{F}_{k-1})| \right\|.
\end{gather*}

Since $\delta\tilde{X}^j=\int_{t_{k-1}}^t \tilde{V}^j(s)\,ds$, we find easily that
\[
\begin{split}
|\E(\delta\tilde{X}^j | \mathcal{F}_{k-1})|
&\le \int_{t_{k-1}}^t\E(|\tilde{V}^j(s)| | \mathcal{F}_{k-1})\,ds\\
&\le \int_{t_{k-1}}^s\sqrt{\E(|\tilde{V}^j(s)|^2 | \mathcal{F}_{k-1})}\,ds \\
& \le C\left( \sqrt{1+|\tilde{X}^j(t_{k-1})|^2+|\tilde{V}^j(t_{k-1})|^2}\right)\tau.
\end{split}
\]
Now, since $ \sqrt{1+|\tilde{X}^j(t_{k-1})|^2+|\tilde{V}^j(t_{k-1})|^2}$ is independent of $\mathcal{C}_{\theta}$, applying Lemma \ref{lmm:normofrandomsum}, one has
\[
\left\| \frac{L}{p-1}\sum_{j\in \mathcal{C}_{\theta},j\neq 1}\sqrt{1+|\tilde{X}^j(t_{k-1})|^2+|\tilde{V}^j(t_{k-1})|^2} \right\|
\le C.
\]
The other term in \eqref{eq:step21aux1} is similar, but much simpler.

Hence, we find
\[
I_1\le C\|\hat{Z}^1(t_{k-1})\|\tau\le C\|\hat{Z}(t)\|\tau+C\tau^2.
\]

\subsubsection*{Substep 2.2. Estimate of $I_2$}

Now, we estimate $I_2$.
\[
\begin{split}
I_2&=\E[\hat{Z}^1(t)-\hat{Z}^1(t_{k-1})]\cdot\chi_1(\tilde{\mathfrak{X}})\\
&=\E[\hat{Z}^1(t)-\hat{Z}^1(t_{k-1})]\cdot\chi_1(\mathfrak{X})
+\E[\hat{Z}^1(t)-\hat{Z}^1(t_{k-1})]\cdot[\chi_1(\tilde{\mathfrak{X}})-\chi_1(\mathfrak{X})]\\
&=: I_{21}+I_{22}.
\end{split}
\]

For $I_{21}$, we recall the matrix $B$  defined in \eqref{eq:B}  and its spectral radius by Assumption \ref{ass:convexity} is controlled by
\[
\rho(B(\tilde{X}^1, X^1)) \le \lambda_M.
\]
Then, one has
\begin{gather}\label{eq:diffhatZaux1}
\begin{split}
\hat{Z}^1(t)-\hat{Z}^1(t_{k-1}) = &\int_{t_{k-1}}^t (\alpha^{-1}-\gamma)(\hat{Z}^1-Z^1)ds
-\alpha\int_{t_{k-1}}^t Z^1\cdot B(\tilde{X}^1(s), X^1(s)) ds\\
&+\int_{t_{k-1}}^t \frac{1}{p-1}\sum_{j\in \mathcal{C}_{\theta}, j\neq 1}
\delta K_{1j}\,ds+\int_{t_{k-1}}^t \chi_1(\mathfrak{X}(s))\,ds.
\end{split}
\end{gather}
Noting Lemma \ref{lmm:Zestimate}, $\alpha^{-1}-\gamma=-\gamma/2$, and that $\|\chi_i\|_{\infty}\le 2\|K\|_{\infty}$, one has
\[
\E\left(\int_{t_{k-1}}^t \frac{1}{2}\gamma(\hat{Z}^1-Z^1)ds
-\alpha\int_{t_{k-1}}^t Z^1\cdot B(\tilde{X}^1, X^1) ds\right)\cdot \chi_1(\mathfrak{X}(t))
\le C\sqrt{J(t)} \tau+C\tau^2,
\]
where we used $\E Z^1\cdot B(\tilde{X}^1(s), X^1(s))\cdot\chi_1(\mathfrak{X}(t))
\le C\lambda_M\|Z^1(s)\| \le C\|Z^1(t)\|+C\tau$.

For the third term in \eqref{eq:diffhatZaux1} dotted with $\chi_i$, one has 
\[
\E \left[ \Big( \int_{t_{k-1}}^t \frac{1}{p-1}\sum_{j\in \mathcal{C}_{\theta}, j\neq 1} \delta K_{1j}\,ds \Big) \cdot \chi_1(\mathfrak{X}(t))\right]
\le \|\chi_1\|_{\infty} \sup_{s\in [t_{k-1}, t_k]} \left\| \frac{1}{p-1}\sum_{j\in \mathcal{C}_{\theta}, j\neq 1} \delta K_{1j}(s) \right\|  \tau. 
\]

Clearly,
\begin{gather}\label{eq:pfaux1}
\left\| \frac{1}{p-1}\sum_{j\in \mathcal{C}_{\theta}, j\neq 1} \delta K_{1j}(s) \right\|
\le L\left(\|Z^1(s)\|+  \Big\| \frac{1}{p-1}\sum_{j\in \mathcal{C}_{\theta}, j\neq 1}|Z^j(s)| \Big\| \right).
\end{gather}

By Lemma \ref{lmm:Zestimate}, one has almost surely that
\[
|Z^j(s)|\le |Z^j(t_{k-1})|+C\tau,
\]
and thus
\[
\Big\| \frac{1}{p-1}\sum_{j\in \mathcal{C}_{\theta}, j\neq 1}|Z^j(s)| \Big\|
\le \Big\| \frac{1}{p-1}\sum_{j\in \mathcal{C}_{\theta}, j\neq 1}|Z^j(t_{k-1})| \Big\|
+C\tau.
\]
Since $\{|Z^j(t_{k-1})|\}$'s are independent of $\mathcal{C}_{\theta}$, 
Lemma \ref{lmm:normofrandomsum} then gives
\[
\begin{split}
\left\| \frac{1}{p-1}\sum_{j\in \mathcal{C}_{\theta}, j\neq 1}|Z^j(t_{k-1})| \right\|
 &\le \Big(\frac{1}{N-1}\sum_{j\neq i}\|Z^j(t_{k-1})\|^2 \Big)^{1/2}\\
&=\|Z^1(t_{k-1})\|.
\end{split}
\]
Hence, one in fact has
\begin{gather}\label{eq:pfaux2}
\begin{split}
\left\| \frac{1}{p-1}\sum_{j\in \mathcal{C}_{\theta}, j\neq 1} \delta K_{1j}(s) \right\|
 &\le 2L\|Z^1(t_{k-1})\|+ C\tau  \\
 &\le 2L\|Z^1(t)\|+C\tau.
\end{split}
\end{gather}

The fourth term can be easily controlled using Lemma \ref{lmm:consistency} so that
\[
\E \int_{t_{k-1}}^t \chi_1(\mathfrak{X}(s))\cdot \chi_1(\mathfrak{X}(t))\,ds
\le \Big(\frac{1}{p-1}-\frac{1}{N-1}\Big)\|\Lambda_1\|_{\infty}\tau.
\]

Now, we move to the estimate of $I_{22}$ term, which is much easier. 
In fact,
\[
I_{22}=\E[\hat{Z}^1(t)-\hat{Z}^1(t_{k-1})]\cdot[\chi_1(\tilde{\mathfrak{X}})-\chi_1(\mathfrak{X})]
\le C\tau\|\chi_1(\tilde{\mathfrak{X}})-\chi_1(\mathfrak{X})\|,
\]
where $\|\hat{Z}^1(t)-\hat{Z}^1(t_{k-1}) \|\le C\tau$ by Lemma \ref{lmm:Zestimate}.

Now, by the definition of $\chi_i$,
\[
\|\chi_1(\tilde{\mathfrak{X}})-\chi_1(\mathfrak{X})\|
\le \left\| \frac{1}{p-1}\sum_{j\in \mathcal{C}_{\theta}, j\neq 1} \delta K_{1j}(s) \right\|
+\left\| \frac{1}{N-1}\sum_{j: j\neq 1} \delta K_{1j}(s) \right\|.
\]
The first term has been estimated in \eqref{eq:pfaux2}. The second term is easily bounded with the same bound as in \eqref{eq:pfaux2}. Hence, $I_2$ is controlled as
\[
I_2 \le C\sqrt{J(t)}\tau+C\tau^2+\frac{1}{p-1}\|\Lambda_1\|_{\infty}\tau.
\]

\subsubsection*{Step 3: final error estimate}

Combining all the estimates above, one has
\begin{gather}
\dot{J}\le -\frac{2}{\gamma}\left[\min(\lambda_m-2L, \gamma^2-(\lambda_M+2L)) \right]J(t)
+\frac{C}{\gamma}\left(\sqrt{J(t)}\tau+\tau^2+\frac{1}{p-1}\tau 
\right).
\end{gather}
Gr\"onwall's inequality then gives the desired result.

\end{proof}

\section{Applications to molecular dynamics simulations}\label{sec:md}
In this section, we discuss possible applications of RBM with splitting to molecular dynamics simulations. 

Molecular dynamics refers to computer simulation of atoms and molecules
to compute the statistics of the distribution and investigate the properties of solids and fluids \cite{ciccotti1987simulation,frenkel2001understanding}.
Consider $N$ ``molecules'' (each might be a model for a real molecule or a numerical molecule that is a packet of many real molecules) that interact with each other:
\begin{gather}\label{eq:md1}
\begin{split}
& dX^i=V^i\,dt,\\
& dV^i=\Big[-\sum_{j:j\neq i}\nabla \phi(X^i-X^j)\Big]\,dt
+d\xi^i.
\end{split}
\end{gather}
Here, $\phi(\cdot)$ is the interaction potential and $d\xi^i$ means some other possible terms that change the momentum, which we will discuss below. Typical examples include the Coulomb potentials 
\[
\phi(x) = \frac{q_i q_j}{r},
\]
where $q_i$ is the charge for the $i$th particle and $r=|x|$, and the Lennard-Jones potential 
\[
\phi(x)=4\left(\frac{1}{r^{12}}-\frac{1}{r^6}\right).
\]
Between ions, both types of potential exist and between charge-neutral molecules, the Lennard-Jones potential might be the main force (the Lennard-Jones interaction intrinsically also arises from the interactions between charges, so these two types are in fact both electromagnetic forces).  To model the solids or fluids with large volume, one often uses a box with length $L$, equipped with the periodic conditions for the simulations.

\subsection{Coupling with the heat bath}\label{subsec:heatbath}

To model the interaction between the molecules with the heat bath, one may consider some thermostats so that the temperature of the system can be controlled at a given value. Typical thermostats include the Andersen thermostat, the Langevin thermostat and the Nos\'e-Hoover thermostat \cite{frenkel2001understanding}.

In the Andersen thermostat \cite[section 6.1.1]{frenkel2001understanding}, one does the simulation for 
\[
d\xi^i=0,
\] 
but a particle can collide with the heat bath each time. In particular, assume the collision frequency is $\nu$, so in a duration of time $t\ll 1$ the chance that a collision has happened is given by the exponential distribution
\[
1-\exp(-\nu t)\approx \nu t,~~t\ll 1.
\]
If a collision happens, the new velocity is then sampled from the Maxwellian distribution with temperature $T$ (i.e., the normal distribution $\mathcal{N}(0, T)$). 

Since the potential $\phi(x)$ is often singular at $x=0$, we need to split the interaction kernel (or the potential) for the simulation and apply RBM for the long-range but smooth part. Hence, Algorithm \ref{alg:rbms} can be applied when evolving the dynamics. 
Here is some subtlety.  If we discretize \eqref{eq:rbm2ndorder} using some second order integration methods like the Verlet method \cite[section 4.3.1]{frenkel2001understanding}, we need to evaluate the forces at $t_k^-$ and $t_k^+$. The force at $t_k^-$ corresponds to the batches for $[t_{k-1}, t_k)$ while the force at $t_k^+$ corresponds to batches for $[t_k, t_{k+1})$. This is not quite efficient as one needs to evaluate the force at $t_k$ twice, so for practical implementation, we evaluate the forces only at $t_k^+$. Then, in Verlet, the velocity is updated using
\begin{gather}\label{eq:velocityupdateverlet}
V_{k+1}^i=V_k^i+\frac{1}{2}[F_{k^+}^i+F_{(k+1)^+}^i]\tau.
\end{gather}
This of course is not a discretization of \eqref{eq:rbm2ndorder} any more.
However, it is expected that there is no significant difference. In fact, it is known that the Verlet scheme is equivalent to the leapfrog scheme (eqns (4.3.1)--(4.3.2) in \cite[section 4.3.1]{frenkel2001understanding}) and \eqref{eq:velocityupdateverlet} corresponds to applying RBM to the leapfrog scheme.
We will call the corresponding algorithm ``Andersen-RBM", which is shown in Algorithm \ref{alg:adr}.

\begin{algorithm}[H]
\caption{Andersen-RBM}
\label{alg:adr}
\begin{algorithmic}[1]
\State Split $K=:K_1+K_2$: $K_1$ has short range, while $K_2$ has long range but is regular.
\State Sample $X^i, V^i$ for all $i$.
\State Obtain a set of random batches. For each $i$, find the batch $\mathcal{C}$ where $i$ lives, and compute:
     \begin{gather}\label{eq:mdrandomforce}
     F^i=\sum_{j: j\neq i}K_1(X^i-X^j)+\frac{N-1}{p-1}\sum_{j\in\mathcal{C},j\neq i}K_2(X^i-X^j).
     \end{gather}

\For {$m \text{ in } 1: [T/\tau]$}   
      \State For each $i$, generate $\zeta_i\sim U(0, 1)$. If $\zeta_i\le 1-\exp(-\nu \Delta t)$, sample $V_i\sim \mathcal{N}(0, T)$.
     \State Update the positions:
     \begin{gather}
     X^i \leftarrow X^i+V^i\tau+\frac{1}{2}F^i\tau^2.
      \end{gather}
      \State Set $F_o^i\leftarrow F^i$ for all $i$.
      \State Obtain a new set of random batches, and compute the forces for all particles $i\in\{1,\cdots, N\}$ as in \eqref{eq:mdrandomforce}.
      \State Update the velocities for all $i$:
      \[
       V^i \leftarrow V^i+\frac{1}{2}[F_o^i+F^i]\tau.
      \]
 \EndFor
\end{algorithmic}
\end{algorithm}

In the underdamped Langevin dynamics, one chooses 
\[
d\xi^i=-\gamma V^i\,dt+\sqrt{\frac{2\gamma}{\beta}}\,dW^i,
\] 
so that the ``fluctuation-dissipation relation'' is satisfied and the system will evolve to the equilibrium with the correct temperature. 
It is well-known that the invariant measure of such systems is given by the  Gibbs distribution
\[
\pi(\mathtt{x}, \mathtt{v}) \propto \exp\left(-\beta(\frac{1}{2}\sum_{i=1}^N |v^i|^2+E(\mathtt{x}))\right),
\]
where $\mathtt{x}=(x^1,\cdots, x^N)\in \R^{Nd}$ and $\mathtt{v}=(v^1,\cdots, v^N)\in \R^{Nd}$.
 Algorithm \ref{alg:rbms} can be applied directly for Langevin dynamics and one possible way to discretize \eqref{eq:rbm2ndorder} is the ``BAOAB''  splitting scheme proposed in \cite{leimkuhler2013robust,leimkuhler2013rational}:
\begin{gather}
\begin{split}\label{baoba}
& V_{k+\frac{1}{2}}=V_{k}+\frac{1}{2} F_k \tau,\ \ (B)\\
& X_{k+\frac{1}{2}}=X_{k}+\frac{1}{2}V_{k+\frac{1}{2}}\tau,\ \ (A)\\
& \hat{V}_{k+\frac{1}{2}}=c_1V_{k+\frac{1}{2}}+c_2R_{k+1},\ \ (O)\\
& X_{k+1}=X_{k+\frac{1}{2}}+\frac{1}{2}\hat{V}_{k+\frac{1}{2}}\tau,\ \ (A)\\
& V_{k+1}=\hat{V}_{k+\frac{1}{2}}+\frac{1}{2}F_{k+1}\tau,\ \ (B)
\end{split}
\end{gather}
where $F_k$ means the force computed at time $t_k=k\tau$.
The coefficients $c_1=e^{-\gamma\tau}$, $c_2=\sqrt{(1-c_1^2)/\beta}$, where $\gamma$ is the friction coefficient.
Clearly, for second order schemes like this ``BAOAB'' splitting scheme,
the forces $F_k$ should be computed by applying RBM on the $K_2$ part. Again, there can be two possible forces at $t_k$ depending on whether using the batches for $[t_{k-1}, t_k)$ or the batches for $[t_k, t_{k+1})$. Similar as in Algorithm \ref{alg:adr}, we use the forces at $t_k^+$ uniformly. The resulted algorithm is similar to  Algorithm \ref{alg:adr} so we omit it. The resulted discretized scheme will be called ``Langevin-RBM", which does not correspond to the discretization of \eqref{eq:rbm2ndorder} directly, but we believe there is no significant difference.

The Nos\'e-Hoover thermostat (\cite[section 6.1.2]{frenkel2001understanding}) is better behaved for keeping the temperature around the desired value. This is even more desirable when we apply RBM, because RBM can potentially increase temperature of the system by $\sim \tau/p$ due to the variance in the force computation. 
This numerical heating is not good if one wants to obtain some accurate results. In this work, our goal is to validate RBM without asking for very high accuracy, so we only investigate the Andersen thermostat and the underdamped Langevin thermostat later in section \ref{sec:numerics} below. We try to keep the temperature correct by adjusting the friction coefficient and by using decreasing step sizes. In fact, our results in section \ref{sec:numerics} are acceptable using these simple strategies.
If more accurate simulation is needed, the Nos\'e-Hoover thermostat can be considered and will be done in future work.

\subsection{Discussion}\label{subsec:dis}

Below, we first discuss the benefits of RBM in the molecular regime and the choice of the splitting $K=K_1+K_2$. Suppose that $r_0$ is the effective range of $K_1$ (i.e., when $|x|\gg r_0$, the effects of $K_1$ can be neglected).  Hence, to enjoy the benefits of RBM, we may desire to choose $r_0$ so that there are only $\mathcal{O}(1)$ particles in the ball $B(X_i, r_0)$ centered at a typical particle $X_i$.

\begin{itemize}
\item 
For kernels whose range cover effectively only a few particles  (like Lennard-Jones fluid with low density), one can pick $r_0$ large enough. In this case, by the fast decay of the potential, $\sum_{j: j\neq i}K_{2}$ is negligible. Using the random approximation $\frac{N-1}{p-1}\sum_{j\in\mathscr{C}}K_{2}$ is not quite necessary.   For short-range potentials with  $\mathcal{O}(1)$ density, though one may make use of the rapid decay of the potential to make the full simulation cheaper, RBM with splitting can still speed up the simulation for such cases as the batch size $p$ can be smaller than the effective number of neighbors  (see Remark \ref{rmk:ljcut} for Lennard-Jones potentials).

\item 
If the range of $K$ is comparable to the size of the simulation domain (like the Lennard-Jones fluid with period box length $L=1$ and long range interactions like the Coulomb potentials) or the density is not low, then each particle can feel the effects from a significant number of other particles. In this case, we pick $r_0$ small so that $B(X_i, r_0)$ contains $\mathcal{O}(1)$ particles.
RBM can speed up computation per iteration. Moreover, we also expect that $\sum_{j: j\neq i}K_{2}$ and $\frac{N-1}{p-1}\sum_{j\in\mathscr{C}, j\neq i}K_{2}$ will be comparable. The time step needed for RBM will be comparable to the time step for the full simulation. RBM will indeed save computational cost for these cases.

\item Compared with the mean-field regime where RBM is asymptotic-preserving and the variance is controlled uniformly in $N$,  the variance scales like $\mathcal{O}(N^2)$ in the molecular regime.  In fact, the factor $(N-1)/(p-1)$ could make the random variable $\frac{N-1}{p-1}\sum_{j\in\mathscr{C}}K_{2}$ differ a lot in magnitude if $K_2$ changes a lot in magnitude. For example, if one applies RBM to Lennard-Jones fluid with high density where one chooses $r_0$ very small, then $|K_2|$ changes from a large value to a small value from $r_0$ to $L/2$. Then, applying RBM with small batch size like $p=2$ could result in noticeable effects like the numerical heating in molecular dynamics simulations. One has to take some actions like increasing $p$, decreasing $\tau$ or other advanced techniques to reduce such effects (see section \ref{subsec:ljnum}). 
\end{itemize}

Since RBM is asymptotic-preserving and the variance is controlled uniformly in $N$, one may be curious whether we should switch to the mean-field regime and then do the molecular dynamics simulation using RBM. A direct way to obtain the $1/(N-1)$  factor is to do the time scaling $\tilde{t}=(N-1)t$, however this corresponds to zooming in time so it is not quite  the mean-field limit where one should look at the dynamics in a larger scale. In fact, if the interacting forces are homogeneous (like the Coulomb interaction) in space, one should zoom out in both time and space so that the prefactor $1/(N-1)$ can appear.
Though with $1/(N-1)$ factor RBM can work reasonably well for step size not being small, there is no intrinsic change in the physics due to the scaling so applying RBM in the original regime (like molecular regime) does the same thing. 
Scaling can however change the step sizes allowed. For example, if one does the time scaling $\tilde{t}=(N-1)t$, one takes $\mathcal{O}(1)$ step size for the new time variable $\tilde{t}$ while one has to take $\tau=\mathcal{O}(1/N)$ time step in the original molecular regime. This small step size restriction is not due to RBM. In fact, for the full simulation, the step size also has to be small due to the summation of $N-1$ terms. Hence, we will apply RBM directly in the molecular regime when it has benefits, as discussed.

\section{Numerical experiments}\label{sec:numerics}

In this section, we perform some numerical experiments to verify the theoretic claims in section \ref{sec:esterr} and validate RBM with kernel splitting (in particular, Andersen-RBM and Langevin-RBM) via the molecular dynamics simulations for Lennard-Jones fluids. All the numerical experiments in this section are performed via MATLAB R2020a on a Mac Pro laptop with Intel i5-6360U CPU @ 2GHz and 8GB memory.

\subsection{A simple illustrative example}

In this example, we consider an underdamped Langevin equation for $(X^i, V^i)\in \R\times \R$. This example is mainly designed to verify that 
Algorithm \ref{alg:rbm2nd1} works for regular kernels and confirm the theoretical results in section \ref{sec:esterr}.
In particular, we consider the following interacting particle system on $\R$ for $i=1,
\cdots, N$:
\begin{gather}\label{eq:simplelangevin}
\begin{split}
& dX^i=V^i \,dt,\\
& dV^i=-\lambda X^idt+\frac{1}{N-1}\sum_{j: j\neq i}\frac{X^i-X^j}{1+|X^i-X^j|^2}dt
-\gamma V^idt+\sqrt{2\gamma/\beta}\, dW^i.
\end{split}
\end{gather}
The kernel
\[
K(x)=\frac{x}{1+|x|^2}
\]
satisfies $|K|\le \frac{1}{2}$ and $|K'|\le 1$.

\begin{figure}[!htbp]
\centering  
\includegraphics[width=0.5\textwidth]{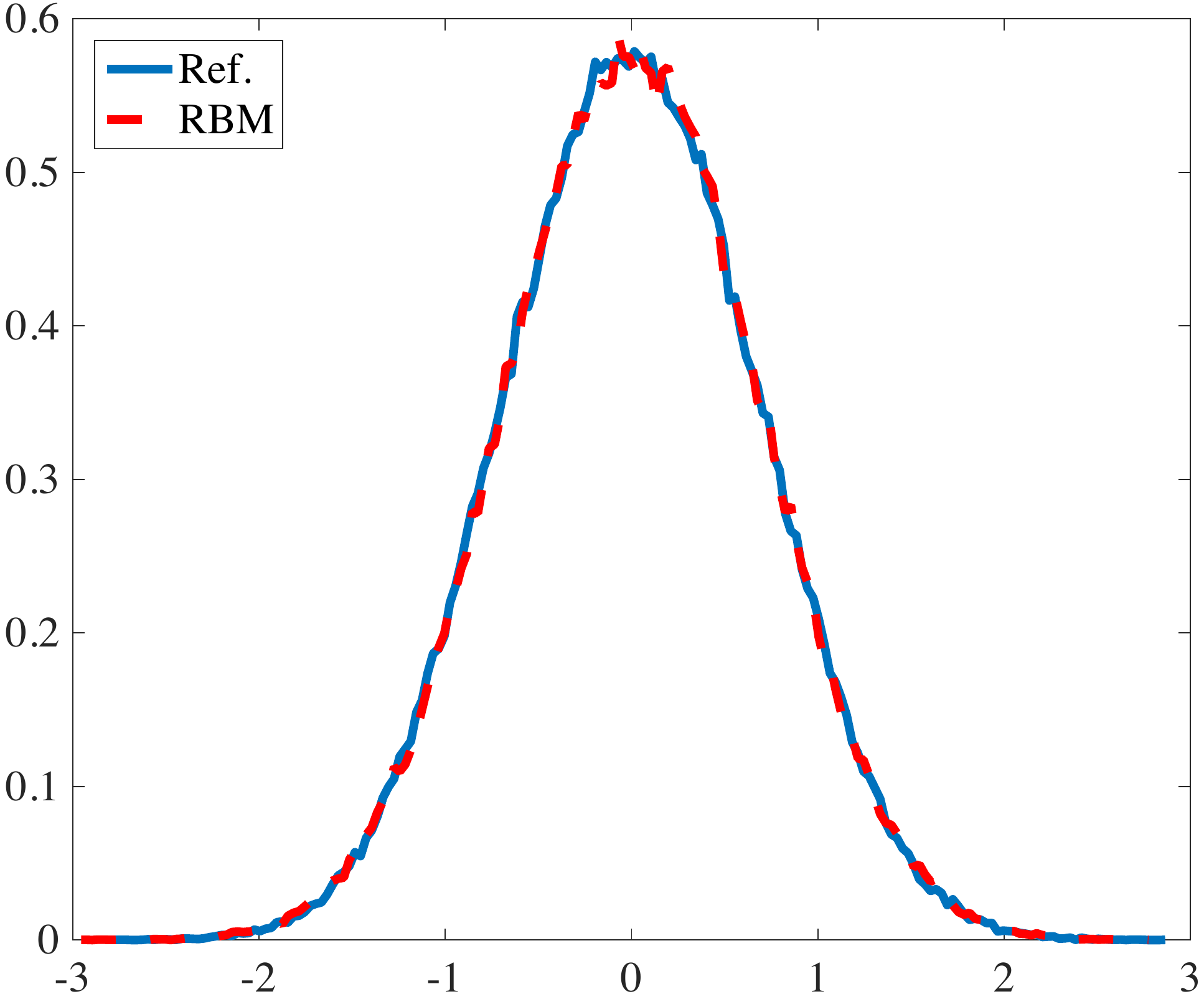}  
\caption{The equilibrium density distribution by RBM (red dashed line) for \eqref{eq:simplelangevin} with $N=500$,  $p=2$, and $\tau=0.02$. The blue solid line is the reference distribution by the full simulation without RBM with step size $\tau=10^{-3}$.} 
 \label{fig:density} 
\end{figure}

Below in the simulations we take
\[
\lambda=\gamma=2.5
\]
so that the conditions in Theorem \ref{thm:longtimeconv} hold, and the temperature is taken as
$\beta^{-1}=1$.
The discretization will be the BAOAB scheme \eqref{baoba}.
The initial positions $X_0^i$'s are sampled i.i.d. from $\mathcal{U}[-0.5, 0.5]$ (the uniform distribution on $[-0.5, 0.5]$), while the initial velocities are also sampled from $\mathcal{U}[-0.5, 0.5]$ but with the empirical mean subtracted $V_0^i\leftarrow V_0^i-\bar{V}_0$ and then the magnitude rescaled such that the average of $(V_0^i)^2$ is the temperature (i.e. $N^{-1}\sum_i (V_0^i)^2=\beta^{-1}$).  For RBM simulations in this example, we always take batch size
\[
p=2.
\]

To verify the effectiveness of RBM,  we first do the simulation for  $N=500$ particles, and check the computed equilibrium distribution.
The system after time $t=50$ is regarded to be in the equilibrium. Hence, we collect the $\{X^i\}$'s from many iterations after $t=50$ as the samples.  In Figure \ref{fig:density}, we show the results by RBM where the Langevin equations are discretized by the ``BAOAB" with step size $\tau=0.02$. We collect the $N=500$ particles as some samples every $\Delta t=0.5$ time (or $25$ iterations for this step size) up to $t=300$. Hence, there are $500*(300-50)/0.5=2.5\times 10^5$ sample points. The reference distribution is plotted using samples at the same time points in the full simulation (i.e. running the Langevin dynamics \eqref{eq:simplelangevin} using ``BAOAB" scheme without RBM) with a step size $\tilde{\tau}=0.001$.  Clearly, the equilibrium distribution density is recovered by RBM with good accuracy.

\begin{table}[!htbp]
\begin{center}
\begin{tabular}{|c|c|c|c|c|}
\hline
\hline
 $$ & $\tau=1$ & $\tau=2^{-1}$ & $\tau=2^{-2}$ & $\tau=2^{-3}$  \\ \hline
 $e^{2x}$ & $0.1098 $ & $   0.0785  $ & $  0.0337  $ & $  9.229\times10^{-5} $  \\ \hline
 $x^2$ & $0.0256  $ & $  0.0252  $ & $  0.0051  $ & $  2.7128\times10^{-4}   $ \\ \hline
$\frac{1}{(x-0.1)^2+0.001}$ & $0.0046  $ & $  0.0016  $ & $  0.0152  $ & $  0.0014  $  \\ \hline
$\frac{1}{1+x^2}$ & $0.0045 $ & $   0.0049  $ & $  6.5742\times10^{-4}  $ & $  0.0016  $  \\
\hline
\hline
\end{tabular}
\medskip
\caption{The weak errors using RBM for equilibrium distribution of \eqref{eq:simplelangevin} with $N=500$.}
\label{t1}
\end{center}
\end{table}

To confirm the sampling correctness quantitatively, we compute the relative weak errors: 
\[
\mathrm{err}_w=\left|\frac{\sum_{i=1}^{N_s} f(X^i)}{N_s}-\frac{\sum_{i=1}^{\bar{N}_s} f(\hat{X}^{i})}{\bar{N}_{s}}\right|\Big/\frac{\sum_{i=1}^{\bar{N}_s} f(\hat{X}^{i})}{\bar{N}_{s}}
\]
 for various test functions $f$. Here, $N_s$ and $\bar{N}_s$ are the numbers of samples for RBM and reference respectively.  In particular, we again run the simulation for $N=500$ with step sizes taken as $\tau=1, 2^{-1},...,2^{-3}$. The samples are again taken every $\Delta t=0.5$ from $t=50$ to $t=300$. 
The samples for the reference are computed using the full simulation with ``BAOAB" scheme and step size $\tilde{\tau}=2^{-10}$.
 The results are listed in Table \ref{t1}, where we take $f(x)=e^{2x}, x^2$, $1/((x-0.1)^2+0.001), 1/(1+x^2)$. Clearly, the weak error in fact tends to zero as we decrease $\tau$ which means RBM indeed can recover the equilibrium distribution. Due to the Monte Carlo fluctuation, the weak convergence order (which should be order $1$ or the weak error scales like $\mathcal{O}(\tau)$ motivated by the results in \cite{jin2020convergence}) is not quite evident in  Table \ref{t1}.

\begin{figure}[!htbp]
\centering  
\includegraphics[width=0.55\textwidth]{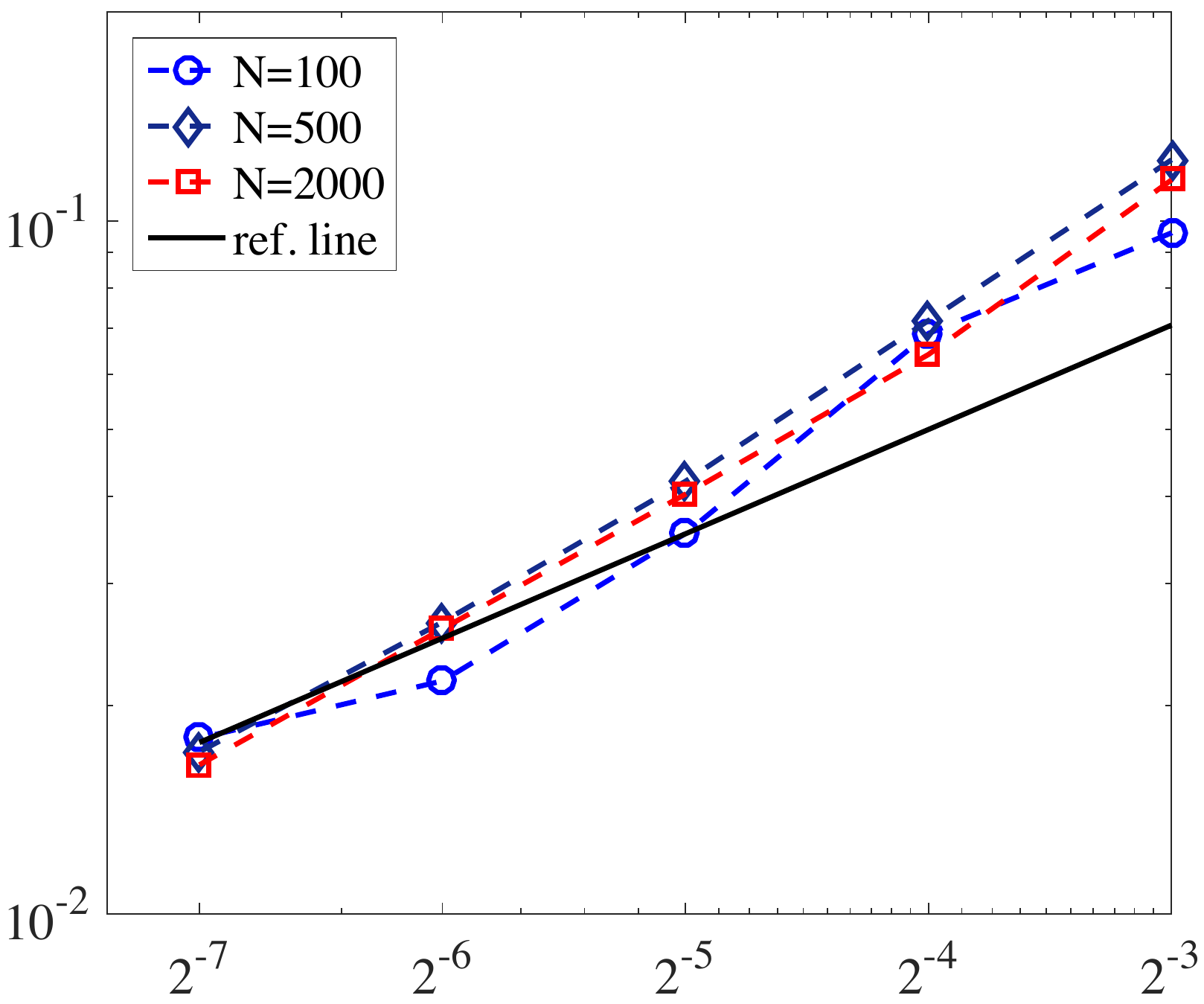}  
\caption{The strong errors $\mathrm{err}_s$ at time $T=2$ v.s. time step size $\tau$ of RBM for \eqref{eq:simplelangevin} with different system sizes. The black solid line is $E=0.2\tau^{1/2}$ for reference. } 
 \label{fig:errs} 
\end{figure}

To verify the strong convergence order claimed in Theorem \ref{thm:longtimeconv},  we consider the relative strong errors: 
\[
\mathrm{err}_s=\sqrt{\frac{\sum_{i=1}^N (X^i-\hat{X}^{i})^2}{N}\Big/\frac{\sum_{i=1}^N ({\hat{X}^{i}})^2}{N}},
\]
 where $X^i$'s are numerical solutions by RBM at $T=2$ and $\hat{X}^{i}$'s are the reference solutions. 
 
 The results are shown in Figure \ref{fig:errs} for sizes $N=50, 500, 2000$. 
 The reference solution is obtained using the full batch (the original particle system) using ``BAOAB" with step size $\tau=2^{-18}$. The same Brownian motions are used for the reference solution and RBM solution (i.e., the realizations of  Brownian Motions  used for the reference solution are stored and then applied for the RBM simulation) for the strong solution. 
The simulation results above indicate that RBM can indeed obtain the $1/2$ strong order for the underdamped Langevin equations with regular kernels, agreeing with our theory in Theorem \ref{thm:longtimeconv} (in this example, the strong order seems slightly better than $1/2$ for the steps we consider). Moreover, the weak error above indicate that the equilibrium can be correctly captured by RBM as well, consistent with the claim in the theorem that the error control is uniform in time.

\subsection{The Lennard-Jones fluid}\label{subsec:ljnum}

In this section, we test RBM with splitting, especially the methods discussed in section \ref{sec:md}, on the Lennard-Jones fluids to validate these methods. In fact, our experience indicates that applying Algorithm \ref{alg:rbm2nd1} directly to such systems truly yields numerical instability, so RBM with splitting is desired for such applications. The potential $\phi$ in \eqref{eq:md1} is then given by ($x\in \R^3$) in this setting
\begin{gather}\label{eq:ljpotential}
\phi(x)=4\left(\frac{1}{r^{12}}-\frac{1}{r^6}\right),\quad r=|x|.
\end{gather}

As mentioned in section \ref{sec:md}, the periodic boxes are used to approximate the fluids of large extent. Let $L$
be the length of the box. With periodic setting, a particle interacts with not only another particle but also its periodic images. Thanks to the fast decaying properties of the Lennard-Jones potential, one can pick a cutoff length $r_c$ so that the interaction between two particles (including particle-image interaction) with distance larger than $r_c$ will be treated in a mean-field fashion (see \cite[Chap. 3]{frenkel2001understanding} for more details). Following \cite[Chap. 3]{frenkel2001understanding}, we choose
\[
r_c=L/2.
\]
With the cutoff mentioned here, the pressure formula is given approximately by (\cite[section 3.4]{frenkel2001understanding},\cite[section 4.2]{lixuzhao2020}):
\begin{align}\label{eq:pressureforperiodic}
P=\rho T+\frac{8}{V}\sum_{i=1}^N \sum\limits_{j: j>i,\tilde{r}_{ij}<r_c}(2\tilde{r}_{ij}^{-12}-\tilde{r}_{ij}^{-6})+\dfrac{16}{3}\pi\rho^2 \left[\dfrac{2}{3}\left(\dfrac{1}{r_c}\right)^9-\left(\dfrac{1}{r_c}\right)^3\right],
\end{align}
where $T$ is the scaled temperature, $V=L^3$ is the volume, and we have introduced
\[
\tilde{r}_{ij}=|\vec{r}_{ij}+\vec{n}L|
\]
for some suitable three-dimensional integer vector $\vec{n}$ so that
$|\vec{r}_{ij}+\vec{n}L|$ is minimized. Note that since $r_c=L/2$, then for each $j$, there is then at most one image (including itself) that falls into $B(x_i, r_c)$.  Hence, when implementing the methods, the forces between particles are computed using the nearest image (see \cite[sec. 4.2.2]{frenkel2001understanding}).

To apply the methods in section \ref{sec:md}, in all the simulations below, we take $r_0=\sqrt[6]{2}$ and split the potential $\phi$ 
\begin{gather}\label{eq:LJsplit1}
\phi(x)=:\phi_1(x)+\phi_2(x),
\end{gather}
where
\begin{equation}\label{eq:LJsplit3}
\phi_1(x)=\begin{cases}
4(\frac{1}{r^{12}}-\frac{1}{r^{6}})+1, & 0<r<\sqrt[6]{2},\\
0, & r\geq\sqrt[6]{2},
\end{cases}
\end{equation}
and
\begin{equation}\label{eq:LJsplit2}
\phi_2(x)=\begin{cases}
-1, & 0<r<\sqrt[6]{2},\\
4(\frac{1}{r^{12}}-\frac{1}{r^{6}}), & r\geq\sqrt[6]{2}.
\end{cases}
\end{equation}
The force $K=-\nabla\phi$ is split correspondingly.  That means the part of interaction force for $r\le \sqrt[6]{2}$ is regarded to have short range and the part for $r>r_0=\sqrt[6]{2}$ is regarded to have long range. The long range parts ($-\nabla \phi_2$) will be computed using RBM. Note that the threshold $r_0=\sqrt[6]{2}$ is different from the cutoff $r_c=L/2$ above. The cutoff $r_c$ above means that the molecular interactions are computed explicitly only for $r\le r_c$ while the ones for $r\ge r_c$ are treated in a mean-field fashion. 

\begin{remark}\label{rmk:ljcut}
Using the cutoff $r_c=L/2$ in simulation yields $\mathcal{O}(N^2)$ complexity for direct simulation. Since
the potential decays very fast and the density considered in this section is $\mathcal{O}(1)$, one may use smaller cutoff $r_c$
and obtain roughly correct results, and this strategy will reduce the complexity to roughly linear as well. However, our experience indicates that applying RBM with $r_0=\sqrt[6]{2}$ still saves as the batch size $p$ (which is $2$ in the experiments below) is much less than the effective number of particles that interact with one particular particle. (In this paper, we mainly aim to validate the effectiveness of RBM so we did not compare the computational time of RBM with those for some current methods for Lennard-Jones fluids.)
\end{remark}

\begin{figure}[!htbp]
\centering
\subfigure[$N=100$.]{
\begin{minipage}[b]{0.47\textwidth}
\includegraphics[width=1\textwidth]{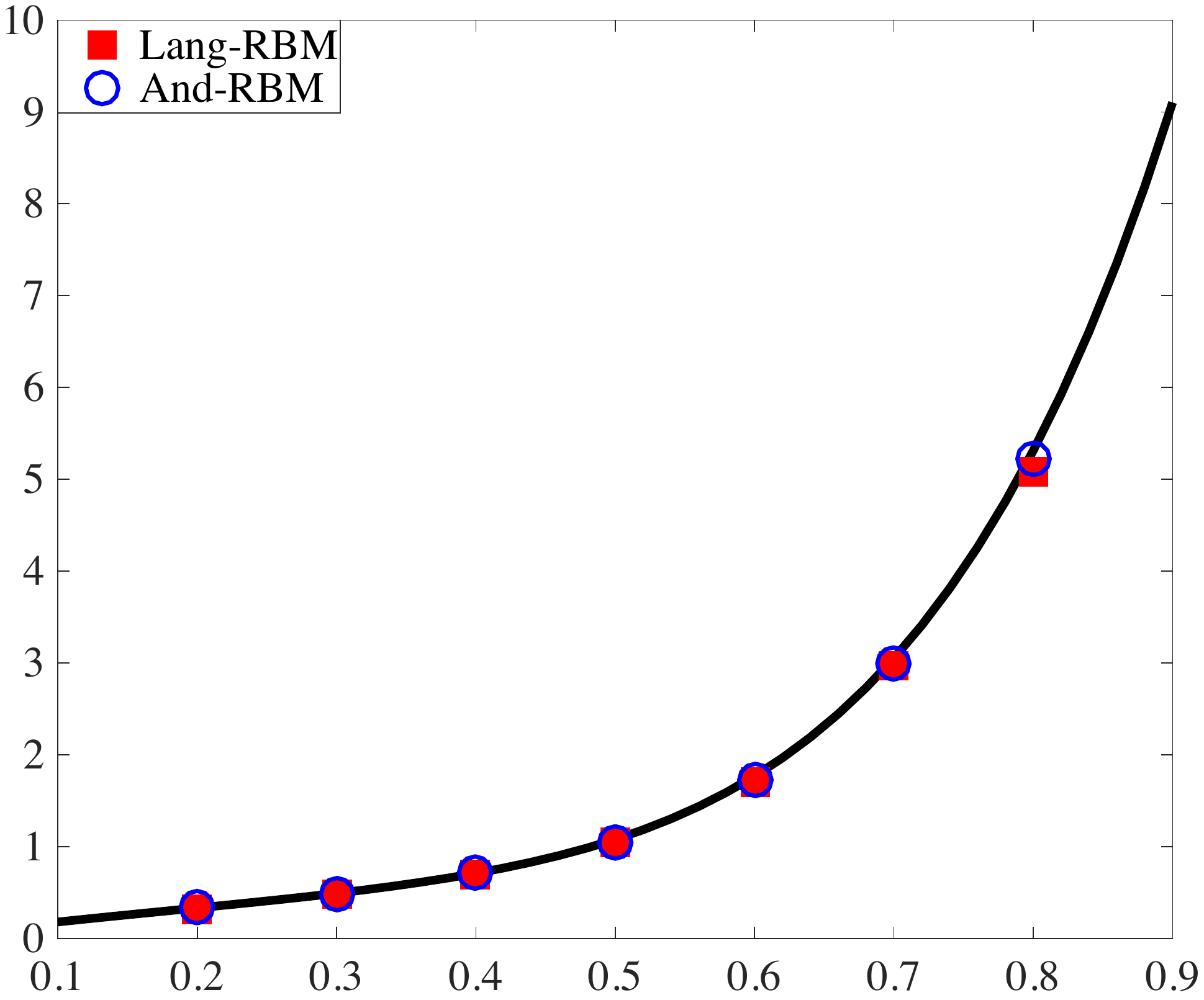}
\end{minipage}
}
\subfigure[$N=500$.]{
\begin{minipage}[b]{0.47\textwidth}
\includegraphics[width=1\textwidth]{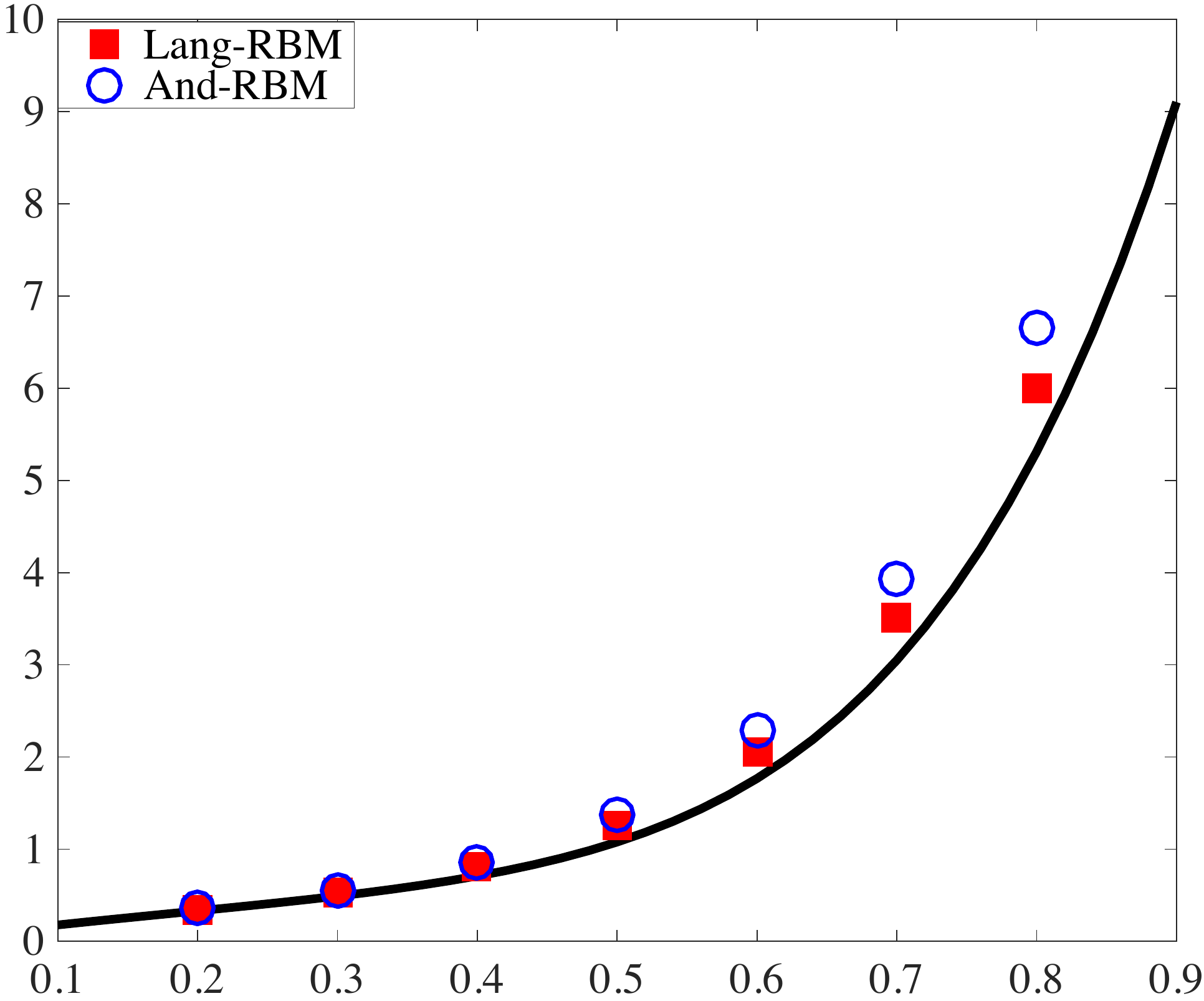}
\end{minipage}
}
\caption{The pressure computed by Andersen-RBM and Langevin-RBM for Lenard-Jones fluid. The black solid line is the  reference fitting curve in \cite{johnson1993lennard}. The blue circles are the pressure computed by the Andersen-RBM and  the red squares are by Langevin-RBM. $\nu=10$; $\gamma=10$; the time is $T=50$; the step size is $\tau=0.001$.}
\label{fig:lgvadr001}
\end{figure}

For the simulation, the temperature is taken to be $\beta^{-1}=2$ and the length of the box is set as $L=(N/\rho)^{1/3}$ for a given density $\rho$. The particles are initially put on the cubic lattice with grid size $L/N^{1/3}$. The initial velocities are randomly chosen from uniform distribution $\mathcal{U}^3[-0.5,0.5]$, and then shifted and rescaled so that the instantaneous temperature matches desired value (i.e., $N^{-1}\sum_i |V_0^i|^2=3\beta^{-1}$). 

For the thermostats, we use both the Andersen thermostat and the underdamped Langevin dynamics for simplicity as indicated in section \ref{subsec:heatbath}, and the resulted schemes are the "Andersen-RBM" and "Langevin-RBM" as already explained in section \ref{subsec:heatbath}. The batch size is taken as $p=2$ for all the experiments here.

We first run the simulations with the collision coefficient $\nu=10$ for Andersen-RBM and the friction coefficient $\gamma=10$ for Langevin-RBM. The simulation before time $T=50$ is regarded as the burn-in phase, and we compute the pressure using the viral formulation \eqref{eq:pressureforperiodic} at a given time point (after $T=50$). We compute $10^5$ such pressures (each for one iteration) and then take the average as the computed the value. 
The computed values using $N=100$ and $N=500$ are shown in Fig. \ref{fig:lgvadr001} for various densities.
The reference curve (black solid line) is the fitting curves in \cite{johnson1993lennard}. 
The results show that RBM with splitting strategy \eqref{eq:LJsplit1}-\eqref{eq:LJsplit3} can work reasonably well for the Lennard-Jones fluid in the considered regime. Our experience indicates that direct application of RBM without splitting (Alg. \ref{alg:rbm2nd1})  to the Lennard-Jones potential \eqref{eq:ljpotential} indeed results in numerical instability.

\begin{figure}[!htbp]
\centering
\subfigure[$\nu=\gamma=10$, $\tau_k=0.001/\log(k+1)$.]{
\begin{minipage}[b]{0.47\textwidth}
\includegraphics[width=1\textwidth]{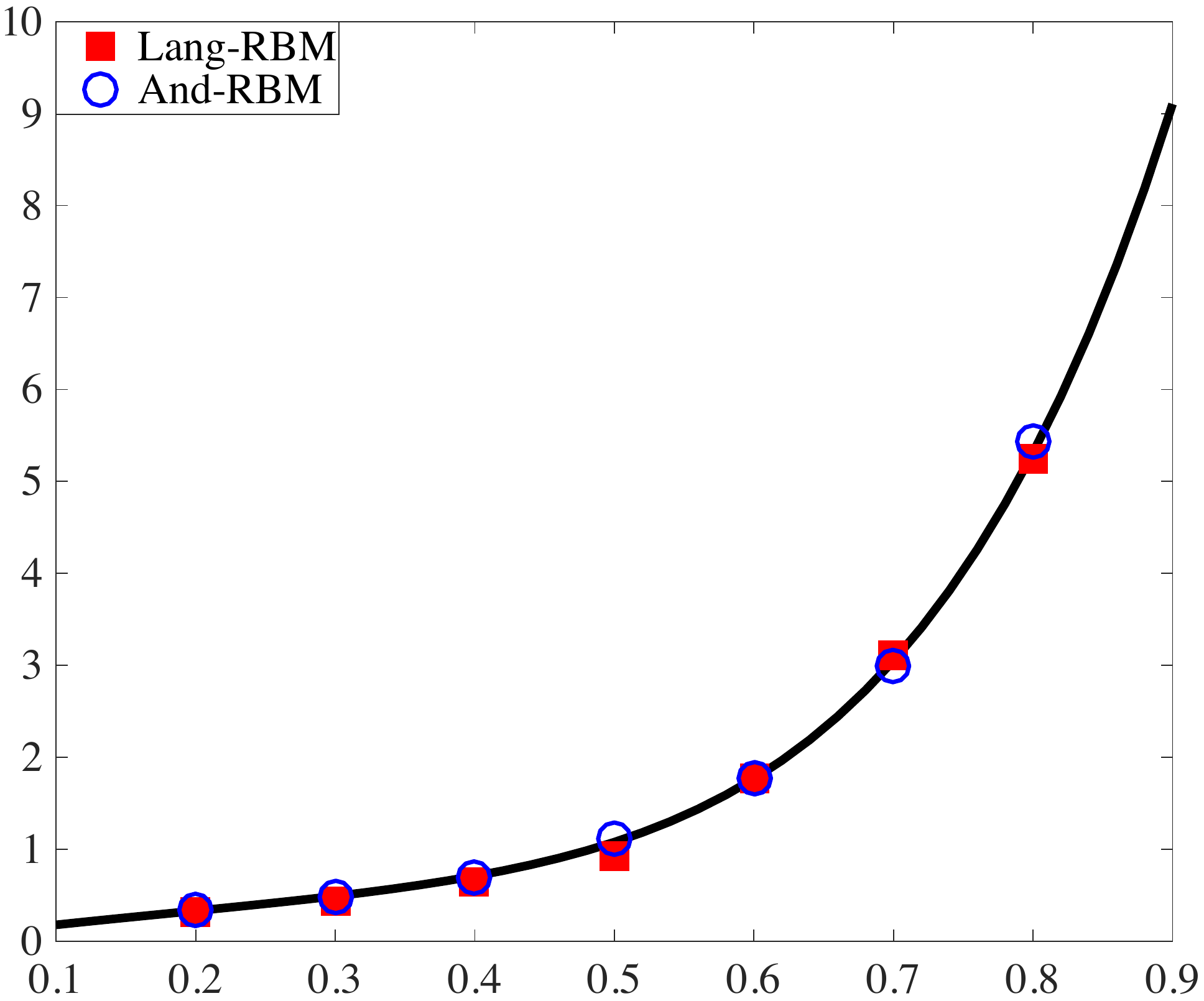}
\end{minipage}}
\subfigure[$\nu=\gamma=50$, $\tau=0.001$.]{
\begin{minipage}[b]{0.47\textwidth}
\includegraphics[width=1\textwidth]{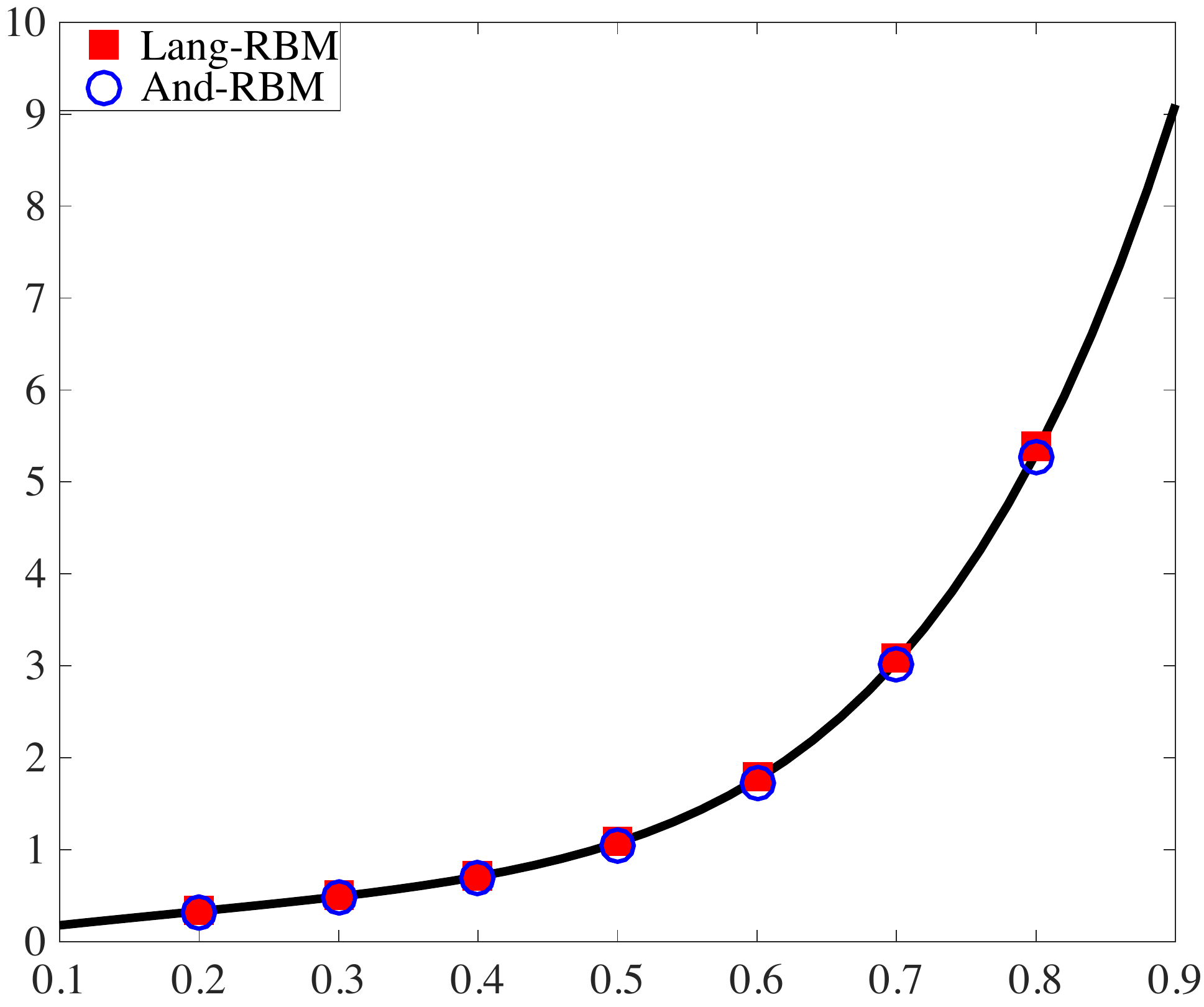}
\end{minipage}
}
\caption{The pressure obtained by Andersen-RBM and Langevin-RBM using two strategies to reduce numerical heating for Lenard-Jones fluid with $N=500$: the blue circles are those by Andersen-RBM while the red squares are by Langevin-RBM.}
\label{fig:lgvadr}
\end{figure}

Another observation from Fig.~\ref{fig:lgvadr001} is that when $N=500$ the extra variance brought by RBM can result in noticeable numerical heating and thus bigger pressure (the variance depends on $N$ because we are working in the molecular regime, and see the discussion in section \ref{subsec:dis}).  To reduce the numerical heating or increase the temperature control ability, we try two strategies. 
The first strategy is to decrease the step size $\tau_k$ as the iteration goes on to decrease the numerical heating. This idea is similar to the one in simulated annealing \cite{MR904050,hwang1990large}. The second strategy is to increase the friction coefficient (i.e., the $\nu$ and $\gamma$ in Andersen thermostat and Langevin dynamics respectively). Increasing the collision or friction coefficient clearly makes the system relax faster to the quasi-equilibrium, but it may also bring in some unphysical effects \cite{hoover1985canonical,frenkel2001understanding}.
We show the numerical results in Figure \ref{fig:lgvadr}. Figure \ref{fig:lgvadr} (a) shows the results using the first strategy (i.e., decreasing step size), where we take
\[
\tau_k=0.001/\log(k+1).
\]
Figure \ref{fig:lgvadr} (b) shows the results using the second strategy (i.e., using larger friction coefficient so that the temperature control is better) where we take $\gamma=\nu=50$. 
Clearly, after these two approaches are applied, the numerical heating effects are reduced significantly, and the correct equation of states is obtained.  As another possible thermostat for better temperature control, one may consider Nos\'e-Hoover thermostat \cite[Chap. 6]{frenkel2001understanding}.

\begin{figure}[!htbp]
\centering  
\includegraphics[width=0.55\textwidth]{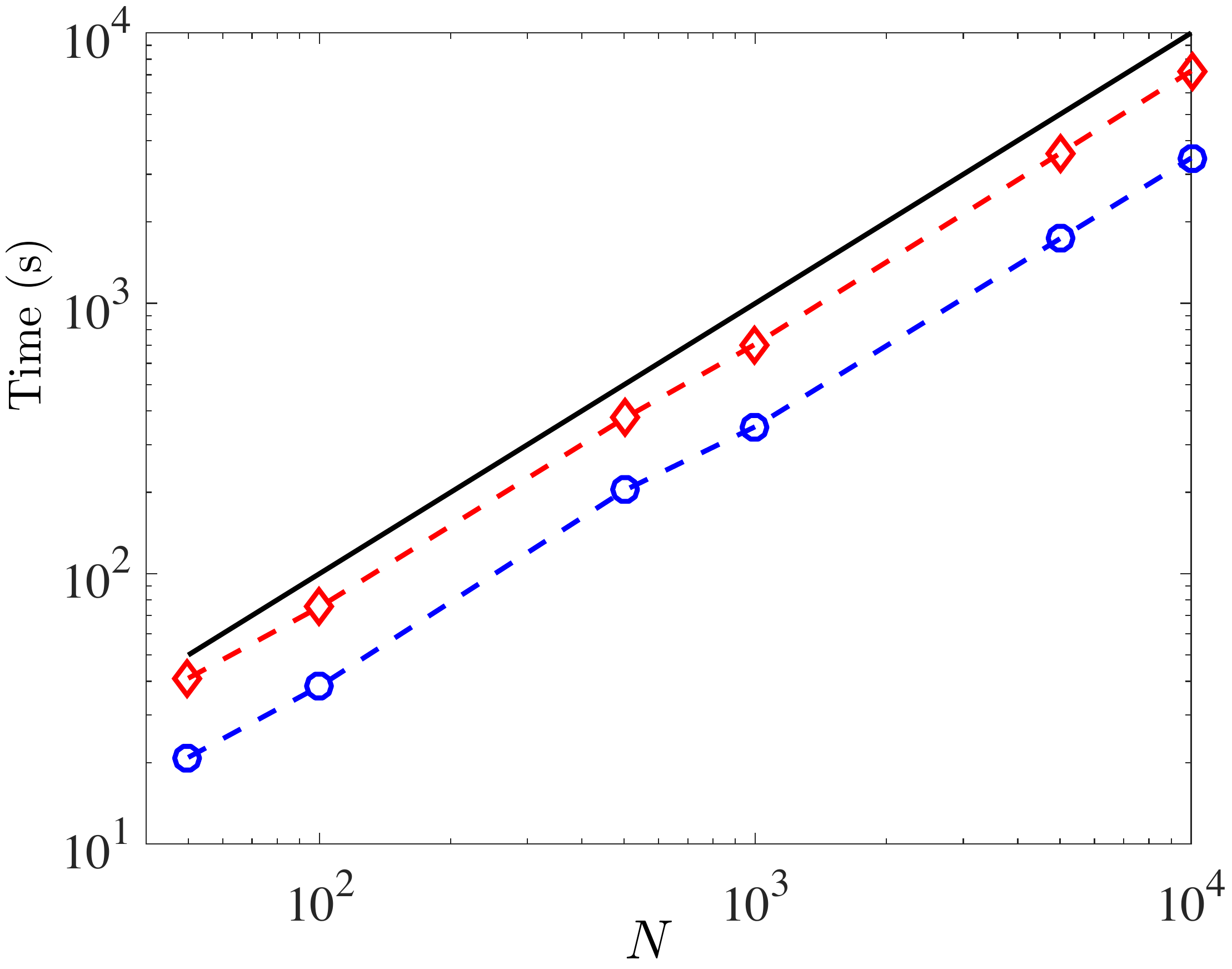}  
\caption{The CPU time v.s. size of the system for RBM-Andersen (blue circles) and RBM-Langevin (red diamonds). Clearly, the computational time scales linearly with the size of the system for both methods} 
 \label{fig:cpu} 
\end{figure}

Lastly, we validate the claim that the complexity of our algorithm is $\mathcal{O}(N)$ in Figure \ref{fig:cpu}, where 
the CPU time is plotted versus the size of the Lennard-Jones system. The simulation is performed up to time $30$ with step size $\tau=2^{-10}$ for systems with density $\rho=0.5$. Clearly, both Andersen-RBM and Langevin-RBM scale linearly with the size of the system, and this result thus verified our claim.

\section*{Acknowledgement}
S. Jin was partially supported by the NSFC grant No. 31571071.  The work of L. Li was partially sponsored by NSFC 11901389, 11971314, and Shanghai Sailing Program 19YF1421300. L.L. would like to thank Prof. Xiantao Li  for suggestions on the Lennard-Jones fluid simulations.

\bibliographystyle{plain}
\bibliography{sdealg}

\end{document}